\newcommand{\cF}{\mathcal{F}}
\newcommand{\mon}{\operatorname{M}}     
\newcommand{\matr}{\operatorname{H}}     
\newcommand{\ideal}[1]{\langle #1 \rangle}
\renewcommand{\phi}{\varphi}
\begin{document}

\title[Noetherianity for infinite-dimensional toric varieties]{Noetherianity for infinite-dimensional \\toric varieties}
\author[J.~Draisma]{Jan Draisma}
\address[Jan Draisma]{
Department of Mathematics and Computer Science\\
Technische Universiteit Eindhoven\\
P.O. Box 513, 5600 MB Eindhoven, The Netherlands\\
and Vrije Universiteit and Centrum voor Wiskunde en Informatica, Amsterdam,
The Netherlands}
\thanks{The first and second author are supported by a Vidi grant from
the Netherlands Organisation for Scientific Research (NWO)}
\email{j.draisma@tue.nl}

\author[R.H.~Eggermont]{Rob H.~Eggermont}
\address[Rob H.~Eggermont]{
Department of Mathematics and Computer Science\\
Technische Universiteit Eindhoven\\
P.O. Box 513, 5600 MB Eindhoven, The Netherlands}
\email{r.h.eggermont@tue.nl}

\author[R.~Krone]{Robert Krone}
\address[Robert Krone]{
School of Mathematics\\
Georgia Institute of Technology\\
686 Cherry Street\\
Atlanta, GA 30332-0160 USA}
\email{krone@math.gatech.edu}
\thanks{The third and fourth author are supported in part by the NSF under grant DMS~1151297.}

\author[A.~Leykin]{Anton Leykin}
\address[Anton Leykin]{
School of Mathematics\\
Georgia Institute of Technology\\
686 Cherry Street\\
Atlanta, GA 30332-0160 USA}
\email{leykin@math.gatech.edu}

\begin{abstract}
We consider a large class of monomial maps respecting an action of the
infinite symmetric group, and prove that the toric ideals arising as
their kernels are finitely generated up to symmetry. Our class includes
many important examples where Noetherianity was recently proved or
conjectured. In particular, our results imply Hillar-Sullivant's
Independent Set Theorem and settle several finiteness conjectures due
to Aschenbrenner, Martin del Campo, Hillar, and Sullivant.

We introduce a {\em matching monoid} and show that its monoid ring is
Noetherian up to symmetry. Our approach is then to factorize a more
general equivariant monomial map into two parts going through this
monoid. The kernels of both parts are finitely generated up to symmetry:
recent work by Yamaguchi-Ogawa-Takemura on the (generalized) Birkhoff
model provides an explicit degree bound for the kernel of the first part,
while for the second part the finiteness follows from the Noetherianity
of the matching monoid ring.
\end{abstract}

\maketitle

\section{Introduction and main result} \label{sec:Intro}

Families of algebraic varieties parameterized by combinatorial
data arise in various areas of mathematics, such as
statistics (e.g., phylogenetic models parameterized by trees
\cite{Allman04,Draisma07b,Draisma12f,Pachter05} or the relations among
path probabilities in Markov chains parameterized by path length
\cite{Haws12,Noren12}), commutative algebra (e.g., Segre powers of
a fixed vector space parameterized by the exponent \cite{Snowden10}
or Laurent lattice ideals \cite{Hillar13}), and combinatorics (e.g.,
algebraic matroids arising from determinantal ideals parameterized by
matrix sizes \cite{Kiraly13} or edge ideals of hypergraphs parameterized
by the number of vertices~\cite{GrossPetrovic12}).  A natural question
is whether such families {\em stabilize} as some of the combinatorial
data tend to infinity. A recently established technique for proving such
stabilization is passing to an infinite-dimensional limit of the family,
giving some equations for that limit, and showing that those equations
cut out a suitably Noetherian space. This then implies that the limit
itself is given by finitely many further equations, and that the family
stabilizes. This technique is applied, for instance, in the proof of
the Independent Set Theorem \cite{hillar2012finite}, and in the first
author's work on the Gaussian $k$-factor model, chirality varieties,
and tensors of bounded rank \cite{Draisma08b,Draisma11d}.

In the present paper, we follow a similar approach, utilizing the new
concept of a {\em matching monoid} to prove that stabilization happens
for a large class of toric varieties.  Our Main Theorem provides one-step
proofs for several existing results that were established in a rather
less general context; and it settles conjectures and questions from
\cite{Aschenbrenner07,hillar2012finite,Hillar13}.  There is a list of
three such consequences at the end of this Introduction.  Moreover, we
show Noetherianity in a constructive manner by complementing the Main
Theorem with an algorithm that produces a finite set of equations whose
orbits define the infinite-dimensional toric variety under consideration.

Instead of
working with inverse systems of affine varieties, we work directly with
direct limits of their coordinate rings. In fact, we formulate our
Main Theorem directly in the infinite-dimensional setting, as going back
to families of finite-dimensional coordinate rings of toric varieties is
fairly straightforward.  Throughout, $\NN$ denotes $\{0,1,2,3,\ldots\},$
and for $k \in \NN$ we write $[k]:=\{0,\ldots,k-1\}$. We write $\Sym(\NN)$
for the group of all bijections $\NN \to \NN$, and $\Inc(\NN)$ for the
monoid of all strictly increasing maps $\NN \to \NN$. Let $Y$ be a set
equipped with an action of $\Sym(\NN)$. We require that the action has
the following property: for each $y \in Y$ there exists a $k_y \in \NN$
such that $y$ is fixed by all of $\Sym(\NN\setminus [k_y]),$ i.e.,
by all elements of $\Sym(\NN)$ that fix $[k_y]$ element-wise. In this
setting, $\Inc(\NN)$ also acts on $Y,$ as follows: for $\pi \in \Inc(\NN)$
and $y \in Y,$ choose a $\pi' \in \Sym(\NN)$ that agrees with $\pi$ on
$[k_y],$ set $\pi y:=\pi' y,$ and observe that this does not depend on the
choice of $\pi'$.  Observe that for each $y \in Y$ the $\Inc(\NN)$-orbit
$\Inc(\NN)y$ is contained in $\Sym(\NN)y$, and that the latter is in fact
equal to the orbit of $y$ under the countable subgroup of $\Sym(\NN)$
consisting of permutations fixing all but finitely many natural numbers.
See also \cite[Section 5]{hillar2012finite}.

Let $R$
be a Noetherian ring (commutative, with $1$), and let $R[Y]$ be the commutative
$R$-algebra of polynomials in which the elements of $Y$ are the variables
and the coefficients come from $R$. The group $\Sym(\NN)$
acts by $R$-algebra automorphisms on $R[Y]$
by permuting the variables. Furthermore, let $k$ be a natural number, and
let $Z=\{z_{ij} \mid i \in [k], j \in \NN\}$ be a second set of variables,
with a $\Sym(\NN)$-action given by $\pi z_{ij}=z_{i \pi(j)}$. Extend this
action to an action by $R$-algebra automorphisms of $R[Z]$. Note that the $\Sym(\NN)$-actions on $R[Y],Z,R[Z]$
all have the property required of the action on $Y$.
Hence they also yield $\Inc(\NN)$-actions, by means of injective
$R$-algebra endomorphisms in the case of $R[Y]$ and $R[Z]$.
In general, when a monoid $\Pi$ acts on a ring $S$ by means
of endomorphisms, $S$ is called {\em $\Pi$-Noetherian} if every
$\Pi$-stable ideal in $S$ is generated by the union of finitely many
$\Pi$-orbits of elements, i.e., if $S$ is Noetherian as a
module under the {\em skew monoid ring} $S * \Pi$; see
\cite{hillar2012finite}.

\begin{thm}[Main Theorem]\label{thm:main}
Assume that $\Sym(\NN)$ has only finitely many orbits on $Y$. Let
$\phi:R[Y] \to R[Z]$ be a $\Sym(\NN)$-equivariant homomorphism that
maps each $y \in Y$ to a {\em monomial} in the $z_{ij}$. Then $\ker \phi$
is generated by finitely many $\Inc(\NN)$-orbits of binomials, and
$\im \phi \cong R[Y]/\ker \phi$ is an $\Inc(\NN)$-Noetherian ring.
\end{thm}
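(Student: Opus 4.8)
\section*{Proof proposal}

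The plan is to follow the route announced in the abstract: factor $\phi$ through the algebra of a combinatorially defined \emph{matching monoid} $\mon$,
\[
R[Y] \xrightarrow{\ \alpha\ } R[\mon] \xrightarrow{\ \beta\ } R[X],
\]
where $\alpha$ and $\beta$ are again $\Sym(\NN)$-equivariant monomial maps with $\phi=\beta\circ\alpha$, where $\alpha$ is surjective, and where $\beta$ is induced by a homomorphism of commutative monoids. The first thing I would do is normalize the data: since $\Sym(\NN)$ has finitely many orbits on $Y$, choose orbit representatives $y_1,\dots,y_n$; by the support hypothesis on the action and equivariance of $\phi$, each $\phi(y_\ell)$ is a monomial $m_\ell$ in the $x_{ij}$ with $i\in[k]$ and $j<d$ for some common bound $d$. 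The monoid $\mon$ should be built so that $\alpha(y_\ell)$ records the combinatorial type of $m_\ell$ together with its placement among the columns, and so that multiplying such elements in $\mon$ merges those that meet along a common column; $\beta$ then merely re-expands a matching into the corresponding $x_{ij}$-monomial, so that $\beta(\alpha(y_\ell))=m_\ell=\phi(y_\ell)$ and hence $\phi=\beta\circ\alpha$. By design $\mon$ is generated, finitely up to the $\Sym(\NN)$-action, by the $\alpha(y_\ell)$ and their translates, which gives surjectivity of $\alpha$; consequently $\ker\phi=\alpha^{-1}(\ker\beta)$ and $\operatorname{im}\phi=\operatorname{im}\beta\cong R[\mon]/\ker\beta$.

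The heart of the argument, and the step I expect to be the main obstacle, is the claim that $R[\mon]$ is $\Inc(\NN)$-Noetherian. My approach would be to equip $\mon$ with a partial order $\preceq$ generated by ``one matching embeds into another, allowing extra columns in between and larger column multiplicities'' together with the action of $\Inc(\NN)$, to verify that $\preceq$ is compatible with the monoid multiplication and with the $\Inc(\NN)$-action, and then to prove that $\preceq$ is a \emph{well-partial-order}. For the last point I would encode an element of $\mon$ as a finite word over an alphabet that is itself well-quasi-ordered --- each letter recording the local type of a matching (which rows occur, with which multiplicities, in which relative column order) --- and invoke Higman's Lemma on words, checking that the encoding is order-preserving in both directions. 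Once $\preceq$ is a multiplication-compatible, equivariant well-partial-order, the standard principle used by Aschenbrenner--Hillar and Hillar--Sullivant --- that then the monoid algebra is Noetherian up to the action --- applies. The delicate parts I anticipate here are getting the order on $\mon$ so that it is simultaneously a well-partial-order \emph{and} compatible with multiplication (the merging of matchings along shared columns is exactly what makes this subtle), and making sure the sub-object relevant to our particular $\phi$ is genuinely ``a matching monoid'' covered by the theorem.

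Granting that $R[\mon]$ is $\Inc(\NN)$-Noetherian, the second kernel is easy to handle: $\ker\beta$ is generated by binomials since $\beta$ comes from a monoid homomorphism, and it is an $\Inc(\NN)$-stable ideal since $\beta$ is equivariant; a routine argument (an $\Inc(\NN)$-stable ideal that is generated by a set $G$ and is finitely generated up to $\Inc(\NN)$ is already generated up to $\Inc(\NN)$ by finitely many elements of $G$) lets me take finitely many $\Inc(\NN)$-orbit generators $b_1,\dots,b_s$ of $\ker\beta$ to be binomials. Then $\operatorname{im}\phi\cong R[\mon]/\ker\beta$ is a quotient of an $\Inc(\NN)$-Noetherian ring, hence $\Inc(\NN)$-Noetherian, which is the second assertion of the theorem.

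It remains to control $\ker\alpha$ and to assemble everything. Since $\alpha$ is a monomial map, $\ker\alpha$ is generated by binomials $\mathbf u-\mathbf v$ with $\alpha(\mathbf u)=\alpha(\mathbf v)$; I would show that any such coincidence is a consequence of ones witnessed within a bounded window --- namely, that two monomials in the $y$'s have equal image in $\mon$ only through re-partitionings of a single merged matching into pieces of the shapes $m_\ell$, and each merged matching occupies a number of columns, hence is built from a number of pieces, bounded in terms of $k$ and $d$. This yields an explicit degree bound $N_0=N_0(k,d)$ for binomial generators of $\ker\alpha$, and since there are only finitely many binomials of degree $\le N_0$ up to the $\Inc(\NN)$-action, $\ker\alpha$ is generated by finitely many $\Inc(\NN)$-orbits of binomials. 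Finally, lifting each $b_t$ to a binomial $\tilde b_t\in\ker\phi$ with $\alpha(\tilde b_t)=b_t$ (possible because $\alpha$ maps monomials onto all of $\mon$), and using surjectivity and equivariance of $\alpha$, one checks that $\ker\phi=\ker\alpha+\langle\Inc(\NN)\{\tilde b_1,\dots,\tilde b_s\}\rangle$, a finite union of $\Inc(\NN)$-orbits of binomials --- precisely the first assertion of the theorem.
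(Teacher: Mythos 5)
Your overall strategy coincides with the paper's: factor $\phi$ through the matching monoid algebra, bound the kernel of the first map, and get Noetherianity of the monoid algebra from a well-partial-order via the Aschenbrenner--Hillar/Hillar--Sullivant principle. But the two steps that carry all the technical weight are not actually proved in your proposal, and the sketches you give for them would fail. The main gap is the wpo claim for $R[\mon]$. The relevant order is the $\Inc(\NN)$-divisibility order: $A\preceq B$ requires an increasing $\pi$ with $B-\pi A\geq 0$ \emph{and} every column sum of $B-\pi A$ at most $d_B-d_A$, so that the quotient is again a good matrix. Your plan to ``encode a matching as a word over a wqo alphabet of columns and invoke Higman'' only captures the first condition, which is strictly weaker: for $k=2$, take $A$ with nonzero columns $(3,0),(0,1),(0,1),(0,1)$ and $B$ with nonzero columns $(3,0),(1,2),(0,1),(0,1)$; then $A\leq B$ entrywise (and this is essentially the only possible column embedding), yet $A\not\preceq B$ because $B-A$ has a column of sum $2>d_B-d_A=1$. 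Moreover Higman's Lemma says nothing about the columns of $B$ \emph{not} hit by the embedding, which must absorb slack of size $d_A$; this is exactly why the paper has to pass to matrices $\tilde A$ with an added slack row, and then run an induction on the number of ``bad'' columns (those with slack entry $\leq d_A/2$, of which there are at most $2k$), with a base case that splits an infinite sequence into a bounded-degree subsequence and a subsequence whose degrees at least double at each step. None of that machinery, or a substitute for it, appears in your outline, and the delicacy you do flag (compatibility of $\preceq$ with multiplication) is not the real obstacle --- the divisibility order is compatible by construction; proving it is a wpo is.

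The second gap is the degree bound for $\ker\alpha$. Your justification --- that a merged matching ``occupies a number of columns, hence is built from a number of pieces, bounded in terms of $k$ and $d$'' --- is false: monomials in the image have unbounded degree, hence unboundedly many columns and pieces. What is true, and needs proof, is that any two $y$-monomials with the same image can be connected by a chain of moves each altering at most $2\max_p k_p-1$ variables; the paper proves this by an exchange argument that, given a variable $y_{p,J}$ of one monomial, extracts from a small divisor of the other a matching covering both $[k_p]$ and the saturated columns (a Hall's-theorem-type step), so that $\phi(y_{p,J})$ can be split off. This cannot be bypassed via Noetherianity, since $R[Y]$ itself is not $\Inc(\NN)$-Noetherian. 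The remaining parts of your argument --- $\ker\phi=\alpha^{-1}(\ker\beta)$, passing Noetherianity to the quotient $\im\phi$, choosing binomial orbit generators of $\ker\beta$ and lifting them, and assembling $\ker\phi$ from $\ker\alpha$ plus finitely many lifted orbits --- are correct and agree with the paper.
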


If an ideal is $\Sym(\NN)$-stable, then it is certainly
$\Inc(\NN)$-stable, so the last statement implies that $R[Y]/\ker \phi$
is $\Sym(\NN)$-Noetherian. The conditions in the theorem are sharp in
the following senses.

\begin{enumerate}
\item The ring $R[Y]$ itself is typically {\em not}
$\Sym(\NN)$-Noetherian, let alone $\Inc(\NN)$-Noetherian.
Take, for instance, $Y=\{y_{ij} \mid i,j \in \NN\}$ with
$\Sym(\NN)$ acting diagonally on both indices, and take
any $R$ with $1 \neq 0$. Then the $\Sym(\NN)$-orbits of the monomials
\[
y_{12}y_{21},y_{12}y_{23}y_{31},y_{12}y_{23}y_{34}y_{41},\ldots
\]
generate a $\Sym(\NN)$-stable ideal that is not generated by any finite
union of orbits (see \cite[Proposition 5.2]{Aschenbrenner07}).

\item The $R$-algebra $R[Z]$ is $\Sym(\NN)$-Noetherian, and even
$\Inc(\NN)$-Noetherian \cite{Cohen87,hillar2012finite}---this is the
special case of our theorem where $Y=Z$ and $\phi$ is the identity---but
$\Sym(\NN)$-stable subalgebras of $R[Z]$ need not be, even when
generated by finitely many $\Sym(\NN)$-orbits of polynomials. For
instance, an (as yet) unpublished theorem due to Krasilnikov says that
in characteristic $2$, the ring generated by
all $2 \times 2$-minors of a $2 \times \NN$-matrix of variables is
not $\Sym(\NN)$-Noetherian. Put differently, we do not know if the
finite generatedness of $\ker \phi$ in the Main Theorem continues to
hold if $\phi$ is an arbitrary $\Sym(\NN)$-equivariant homomorphism,
but certainly the quotient is not, in general, $\Sym(\NN)$-Noetherian.

\item Moreover, subalgebras of $R[Z]$ generated by finitely many
$\Inc(\NN)$-orbits of {\em monomials} need not be $\Inc(\NN)$-Noetherian;
see Krasilnikov's example in \cite{hillar2012finite}.  However, our Main
Theorem implies that subalgebras of $R[Z]$ generated by finitely many
$\Sym(\NN)$-orbits of monomials {\em are} $\Inc(\NN)$-Noetherian.
\end{enumerate}

Our Main Theorem applies to many problems on Markov bases of families
of point sets. In such applications, the following strengthening is
sometimes useful.

\begin{cor}\label{cor:main}
Assume that $\Sym(\NN)$ has only finitely many orbits on
$Y,$ and let $S$ be an $R$-algebra with trivial
$\Sym(\NN)$-action. Let
$\phi:R[Y] \to S[Z]$ be a $\Sym(\NN)$-equivariant
$R$-algebra homomorphism that
maps each $y \in Y$ to an element of $S$ times a monomial in the $z_{ij}$. Then $\ker \phi$
is generated by finitely many $\Inc(\NN)$-orbits of binomials, and
$\im \phi \cong R[Y]/\ker \phi$ is an $\Inc(\NN)$-Noetherian ring.
\end{cor}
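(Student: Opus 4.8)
The plan is to deduce Corollary~\ref{cor:main} from the Main Theorem by absorbing the coefficients coming from $S$ into finitely many new, $\Sym(\NN)$-fixed variables. Write $\phi(y)=s_y m_y$ with $s_y\in S$ and $m_y$ a monomial in the $x_{ij}$. Two preliminary reductions are harmless. First, since $\Sym(\NN)$ has only finitely many orbits on $Y$, only finitely many elements $s_1,\dots,s_n\in S$ occur among the $s_y$; the $R$-subalgebra $R'=R[s_1,\dots,s_n]\subseteq S$ is Noetherian by the Hilbert basis theorem and satisfies $\im\phi\subseteq R'[X]$, so we may replace $S$ by $R'$ without affecting $\ker\phi$. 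Second, $Y_0:=\{y\in Y\mid\phi(y)=0\}$ is a union of finitely many orbits with $\langle Y_0\rangle\subseteq\ker\phi$, and $\phi$ factors $\Sym(\NN)$-equivariantly through $R[Y]/\langle Y_0\rangle\cong R[Y\setminus Y_0]$; proving the corollary for the induced map and then adjoining the finitely many monomial generators $Y_0$ recovers it for $\phi$. Henceforth every $s_y$ is a nonzero element of the finitely generated Noetherian $R$-algebra $R'$.

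Fix a presentation $R'\cong P/I$ with $P=R[t_1,\dots,t_n]$ and $t_i\mapsto s_i$, endow $P$ and $P[X]$ with the $\Sym(\NN)$-action that is trivial on the $t_i$, and note $P[X]=R[X\sqcup\{t_1,\dots,t_n\}]$. The key step is to build a $\Sym(\NN)$-equivariant \emph{monomial} homomorphism $\tilde\phi\colon R[Y]\to P[X]$ lifting $\phi$: on a chosen orbit representative $y_i$ set $\tilde\phi(y_i)=t_{c(i)} m_{y_i}$, where $s_{c(i)}$ is the coefficient of $\phi(y_i)$, and extend by equivariance. This is well defined because, for $\tau$ in the $\Sym(\NN)$-stabilizer of $y_i$, equivariance of $\phi$ forces $s_{c(i)}(\tau m_{y_i}-m_{y_i})=0$, and since $s_{c(i)}\neq0$ while $\tau m_{y_i}$ and $m_{y_i}$ are monomials, this yields $\tau m_{y_i}=m_{y_i}$. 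Composing $\tilde\phi$ with the equivariant quotient map $P[X]\to R'[X]$ returns $\phi$, so $\ker\tilde\phi\subseteq\ker\phi$, and $\tilde\phi$ is of the type treated by the Main Theorem, except that its target carries $n$ extra $\Sym(\NN)$-fixed variables. I expect this to be the one genuinely non-routine point: one must check that the Main Theorem, together with its proof, extends verbatim when finitely many $\Sym(\NN)$-fixed variables are adjoined to the target. Since that proof factors a monomial map through the matching monoid, the extra variables correspond to adjoining finitely many central free generators to that monoid, which preserves Noetherianity up to symmetry by a Dickson-type argument and leaves the explicit degree bound for the first factor untouched. Granting this, $\ker\tilde\phi$ is generated by finitely many $\Inc(\NN)$-orbits of binomials and $R[Y]/\ker\tilde\phi$ is $\Inc(\NN)$-Noetherian.

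It remains to descend from $\tilde\phi$ to $\phi$. Both ideals are $\Inc(\NN)$-stable, so $\ker\phi/\ker\tilde\phi$ is an $\Inc(\NN)$-stable ideal of the $\Inc(\NN)$-Noetherian ring $R[Y]/\ker\tilde\phi$, hence generated by finitely many $\Inc(\NN)$-orbits; together with the orbits of binomials generating $\ker\tilde\phi$ this exhibits $\ker\phi$ as generated by finitely many $\Inc(\NN)$-orbits, while $R[Y]/\ker\phi\cong(R[Y]/\ker\tilde\phi)/(\ker\phi/\ker\tilde\phi)$ is a quotient of an $\Inc(\NN)$-Noetherian ring and so is itself $\Inc(\NN)$-Noetherian; undoing the two preliminary reductions preserves both conclusions. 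To obtain all generators of $\ker\phi$ in \emph{binomial} form one uses that $\ker\phi=\tilde\phi^{-1}(I\cdot P[X])$ is the preimage of a binomial ideal under the monomial map $\tilde\phi$, hence binomial, as soon as the presentation ideal $I$ can be chosen binomial --- which holds in the intended applications, where $S$ is a polynomial or Laurent polynomial ring over $R$ and one may take $I=0$, so that $\ker\phi=\ker\tilde\phi+\langle Y_0\rangle$ outright.
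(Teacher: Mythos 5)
Your outline is essentially sound, but it takes a genuinely different route at the key technical point, and that point is left as an IOU. The paper never allows $\Sym(\NN)$-fixed variables in the target of a monomial map: it sets $Y':=Y\times\NN$, adjoins variables $x'_{p,j}$ carrying an index $j\in\NN$ on which $\Sym(\NN)$ acts, applies the Main Theorem \emph{verbatim} to the monomial map $\phi'$ sending $(\pi y_p,j)\mapsto x'_{p,j}\,\pi u_p$, and then specializes $x'_{p,j}\mapsto s_p$; both conclusions descend along the surjection $\rho:R[Y']\to R[Y]$, with $\Inc(\NN)$-Noetherianity of $\im\phi'$ supplying finite generation of $\ker\psi|_{\im\phi'}$. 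You instead adjoin finitely many $\Sym(\NN)$-\emph{fixed} variables $t_i$ and invoke an extension of the Main Theorem to such targets. That extension is true, but as written it is exactly the crux and is only sketched: the reduction of Section~\ref{sec:Reduction} discards the orbits with $k_p=0$ because they map to $1$, which is no longer harmless once a $t$-factor is attached, and the matching monoid gets replaced by a submonoid of $\NN^n\times G$, so ``verbatim'' is optimistic. Note that you can close this gap without reopening the proof at all, by deriving your extension from the Main Theorem as stated via precisely the paper's device (trade each fixed $t_i$ for an orbit of variables $t_{i,j}$, $j\in\NN$, blow up $Y$ to $Y\times\NN$, and specialize $t_{i,j}\mapsto t_i$) --- at which point your lift-and-descend argument collapses into the paper's one commutative square. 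Your descent step (finite generation of $\ker\phi/\ker\tilde\phi$ from $\Inc(\NN)$-Noetherianity of $R[Y]/\ker\tilde\phi$) is correct and exactly parallels the paper's use of $\ker\psi|_{\im\phi'}$; the well-definedness check for $\tilde\phi$ is also fine. What the paper's version buys is that it stays strictly inside the hypotheses of the Main Theorem; what yours would buy, once the extension is actually proved, is a reusable statement about targets with finitely many invariant variables.

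On binomiality you are in fact more cautious than the statement you were asked to prove, and rightly so: for a general $R$-algebra $S$ the extra generators reflecting relations among the coefficients $s_p$ need not be binomials. For instance, with $R=\mathbb{Q}$, $S=\mathbb{Q}[s]/(s^2-s-1)$, $\phi(y_{1,j})=s\,x_{0,j}$ and $\phi(y_{2,j})=x_{0,j}$, the kernel contains the trinomial $y_{1,0}^2-y_{1,0}y_{2,0}-y_{2,0}^2$, while every binomial in the kernel has equal degree in the $y_{1,\cdot}$-variables on both terms, so the binomials do not generate. The paper's own proof is consistent with this: it only asserts ``finitely many $\Inc(\NN)$-orbits'' for the contribution of $\ker\psi|_{\im\phi'}$, and genuine binomiality holds in the intended applications, where the $s_p$ are themselves monomials --- which is exactly the caveat you formulate via the presentation ideal $I$.
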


\begin{proof}[Proof of the Corollary given the Main
Theorem.]
Let $y_p, p \in [N]$ be representatives of the
$\Sym(\NN)$-orbits on $Y.$ Then for all $p \in [N]$ and
$\pi \in \Sym(\NN)$ we have $\phi(\pi y_p)=
s_p \pi u_p$ for some monomial $u_p$ in the
$z_{ij}$ and some $s_p$ in $S$. Apply the Main Theorem to $Y':=Y \times \NN$ and $Z \cup
Z'$ with $Z':=\{z'_{p,j} \mid p \in [N], j \in \NN\}$ and $\phi'$ the map
that sends the variable $(\pi y_p,j)$ to $z'_{p,j} \pi u_p$.
Consider the commutative diagram
\[
\xymatrix{
R[Y'] \ar[r]^{\phi'} \ar[d]^{\rho: (y,j) \mapsto y} &
	R[Z \cup Z'] \ar[d]^{\psi: z'_{pj} \mapsto s_p} \\
R[Y] \ar[r]^{\phi} & S[Z]
}
\]
of $\Sym(\NN)$-equivariant $R$-algebra homomorphisms. By the Main
Theorem, $\im \phi'$ is $\Inc(\NN)$-Noetherian, hence so is its image
under $\psi$; and this image equals $\im \phi$ because $\rho$ is surjective.
Similarly, $\ker(\psi \circ \phi')$ is generated by finitely many
$\Inc(\NN)$-orbits (because this is the case for both $\ker
\phi'$ and $\ker \psi|_{\im \phi'}$), hence so is its image under $\rho$; and this image is $\ker \phi$ because $\rho$ is surjective.
\end{proof}

Here are some consequences of our Main Theorem.

\begin{enumerate}
\item Our Main Theorem implies \cite[Conjecture 5.10]{Aschenbrenner07}
that chains of ideals arising as kernels of monomial maps of the form
$y_{i_1,\ldots,i_k} \mapsto z_{i_1}^{a_1} \cdots z_{i_k}^{a_k}$, where
the indices $i_1,\ldots,i_k$ are required to be distinct, stabilize.
In \cite{Aschenbrenner07} this is proved in the squarefree case, where
the $a_j$ are equal to $1$. In the Laurent polynomial setting more is
known \cite{Hillar13}.

\item A consequence of \cite{deLoera95} is that for any $n \geq 4$ the
vertex set $\{v_{ij}:=e_i + e_j \mid i \neq j\} \subseteq \RR^n$ of the
$(n-1)$-dimensional second hypersimplex has a Markov basis corresponding
to the relations $v_{ij}=v_{ji}$ and $v_{ij}+v_{kl}=v_{il}+v_{kj}$.
Here is a qualitative generalisation of this fact. Let $m$ and $k$ be
fixed natural numbers.  For every $n \in \NN$ consider a finite set $P_n
\subseteq \ZZ^m \times \ZZ^{k \times n}$. Let $\Sym(n)$ act trivially on
$\ZZ^m$ and by permuting columns on $\ZZ^{k \times n}$.  Assume that there
exists an $n_0$ such that $\Sym(n)P_{n_0}=P_n$ for $n \geq n_0$; here we
think of $\ZZ^{k \times n_0}$ as the subset of $\ZZ^{k \times n}$ where
the last $n-n_0$ columns are zero. Then Corollary~\ref{cor:main} implies
that there exists an $n_1$ such that for any Markov basis $M_{n_1}$
for the relations among the points in $P_{n_1}$, $\Sym(n) M_{n_1}$ is a Markov basis for $P_n$ for all $n \geq
n_1$. For the second hypersimplex, $n_0$ equals $2$ and $n_1$ equals $4$.

\item A special case of the previous consequence is the
Independent Set Theorem of \cite{hillar2012finite}. We briefly
illustrate how to derive it directly from Corollary~\ref{cor:main}.
Let $m$ be a natural number and let $\cF$ be a family of subsets of
a finite set $[m]$. Let $T$ be a subset of $[m]$ and assume
that each $F \in \cF$ contains at most one element of $T$.
In other words, $T$ is an independent set in the hypergraph
determined by $\cF$. For $t \in [m] \setminus T$ let $r_t$
be a natural number. Set $Y:=\{y_\alpha \mid
\alpha \in \NN^T \times \prod_{t \in [m] \setminus T} [r_t] \}$ and
$Z:=\{z_{F,\alpha} \mid F \in \cF, \alpha \in \NN^{F \cap
T} \times \prod_{F \setminus T} [r_t] \},$ and let $\phi$
be the homomorphism $\ZZ[Y] \to \ZZ[Z]$ that maps
$y_{\alpha}$ to $\prod_{F \in \cF} z_{F,\alpha|_F},$ where
$\alpha|_F$ is the restriction of $\alpha$ from $[m]$ to
$F$. Then $\phi$ is equivariant with respect to the action
of $\Sym(\NN)$ on the variables induced by the diagonal
action of $\Sym(\NN)$ on $\NN^T,$ and (a strong form of) the Independent Set
Theorem boils down to the statement that $\ker \phi$ is
generated by finitely many $\Sym(\NN)$-orbits of binomials.
By the condition that $T$ is an independent set, each $z$-variable has at most
one index running through all of $\NN$. Setting $S$ to be
$\ZZ[z_{F,\alpha} \mid F \cap T = \emptyset],$
we find that $Y,$ $S,$ the remaining
$z_{F,\alpha}$-variables, with $|F \cap T|=1,$ and the map
$\phi$ satisfy the conditions of the corollary. The
conclusion of the corollary now implies the Independent Set
Theorem.
\end{enumerate}

The remainder of the paper is organized as follows: In
Section~\ref{sec:Reduction} we reduce the Main Theorem to a particular
class of maps $\phi$ related to {\em matching monoids} of complete
bipartite graphs. For these maps, finite generation of the kernel
follows from recent results on the Birkhoff model \cite{Yamaguchi14}; see
Section~\ref{sec:Relations}, where we also describe the image of $\phi$.
In Section~\ref{sec:Noetherianity1} we prove Noetherianity of $\im \phi,$
still for our special $\phi$. As in \cite{Cohen87,hillar2012finite}, the
strategy in Section~\ref{sec:Noetherianity1} is to prove that a partial
order on certain monoids is a well-partial-order. In our case, these are
said matching monoids, and the proof that they are well-partially-ordered
is quite subtle.  In Section~\ref{sec:Computational} we
establish that a finite $\Inc(\NN)$-generating set of $\ker \phi$ is
(at least theoretically) computable. The last Section describes a
simpler {\em procedure} that one can attempt in order to obtain a
generating set; at the moment, we do not know if this procedure is
guaranteed to terminate. We conclude the paper with a computational
example for which termination does occur.

\subsection*{Acknowledgments}

We thank a referee for pointing out the recent results \cite{Yamaguchi14}
on the Markov degree of the (generalized) Birkhoff model---which are
stronger than a bound that we had in the original version
of this paper---and for the remark that the integral polytopes
capturing this model are not only normal but even compressed (see
Remark~\ref{re:Compressed}).

\section{Reduction to matching monoids}
\label{sec:Reduction}

In this section we reduce the Main Theorem to a special case to be treated
in the next two sections. To formulate this special case, let $N \in \NN$
and for each $p \in [N]$ let $k_p \in \NN$. First, introduce a set $Y'$
of variables $y'_{p,J}$ where $p \in [N]$ and $J=(j_l)_{l \in [k_p]}
\in \NN^{[k_p]}$ is a $k_p$-tuple of {\em distinct} natural numbers. The
group $\Sym(\NN)$ acts on $Y'$ by $\pi y'_{p,J}=y'_{p,\pi(J)}$ where
$\pi(J)=(\pi(j_l))_{l \in [k_p]}$.  This action has finitely many orbits
and satisfies the condition preceding the Main Theorem.  Second, let
$X$ be a set of variables $x_{p,l,j}$ with $p \in [N], l \in [k_p],
j \in \NN$ and let $\Sym(\NN)$ act on $X$ by its action on the last
index.

\begin{prop} \label{prop:Reduction}
Let $\phi':R[Y'] \to R[X]$ be the $R$-algebra homomorphism sending
$y'_{p,J}$ to $\prod_{l \in [k_p]} x_{p,l,j_l}$. Then the Main Theorem
implies that $\ker \phi'$ is generated by finitely many $\Inc(\NN)$-orbits
of binomials, and that $\im \phi'$ is an $\Inc(\NN)$-Noetherian ring.
Conversely, if these two statements hold for all choices of $N,
k_1,\ldots,k_N \in \NN$, then the Main Theorem holds.
\end{prop}

\begin{proof}
The first statement is immediate---note that the pair $(p,l)$ comprising
the first two indices of the variables $x_{p,l,j}$ takes on
finitely many, namely, $\sum_p k_p$ values.  

For the second statement, consider a monomial map $\phi:R[Y] \to R[Z]$
with $Z=\{z_{i,j} \mid i \in [k], j \in \NN\}$ as in the Main Theorem. Let
$N$ be the number of $\Sym(\NN)$-orbits on $Y$ and let $y_p,\ p \in [N]$
be representatives of the orbits.  Set $k_p:=k_{y_p}$ for $p \in [N],$
so that $\pi y_p$ depends only on the restriction of $\pi \in \Sym(\NN)$
to $[k_p]$. We have thus determined the values of $N$ and the $k_p$,
and we let $Y',X$ be as above.

Let $\psi:R[Y'] \to R[Y]$ be the $R$-algebra homomorphism defined by
sending $y'_{p,J}$ to $\pi y_p$ for any $\pi \in \Sym(\NN)$ satisfying
$\pi(l)=j_l,\ l \in [k_p]$. This homomorphism is $\Sym(\NN)$-equivariant.
The composition $\phi'':=\phi \circ \psi:R[Y'] \to R[Z]$ satisfies the
conditions of the Main Theorem. Since $\psi$ is surjective, it maps any
generating set for $\ker \phi''$ onto a generating set for $\ker \phi$;
moreover, we have $\im \phi''=\im \phi$. Hence the conclusions of the
Main Theorem for $\phi''$ imply those for $\phi$.

Next write $\phi''(y_{p,J}) = \prod_{i \in [k],j \in
\NN}z_{i,j}^{d_{p,i,j}}$. Observe that $d_{p,i,j} = 0$ whenever
$j \not\in J$, using the fact that any permutation that fixes $J$
also fixes $y_{p,J}$, and hence must also fix $\phi''(y_{p,J})$ by
$\Sym(\NN)$-equivariance. Now let $\phi':K[Y'] \to K[X]$ be as above and
define $\rho: R[X] \to R[Z]$ by $\rho(x_{p,l,j}) = \prod_{i \in [k]}
z_{i,j}^{d_{p,i,j}}$. By construction, we have $\rho \circ\phi'
= \phi''$.

Now $\im \phi''$ is a quotient of $\im \phi'$ and $\ker \phi''$ is
generated by $\ker \phi'$ together with pre-images of generators of
$\ker(\rho|_{\im \phi'})$, hence the conclusions of the Main Theorem
for $\phi'$ imply those for $\phi''$, as desired.
\end{proof}

In what follows, we will drop the accents on the $y$-variables and write
$Y$ for the set of variables $y_{p,J}$, $X$ for the set of variables
$x_{p,l,j}$, and $\phi$ for the $R$-algebra homomorphism
\begin{equation} \label{eq:phi}
\phi: R[Y] \to R[X],\ y_{p,J} \mapsto \prod_{l \in [k_p]}
x_{p,l,j_l}. 
\end{equation}
Monomials in the $x_{p,l,j}$ will be denoted $x^A$ where $A \in \prod_{p
\in [N]} \NN^{[k_p] \times \NN}$ is an $[N]$-tuple of finite-by-infinite
matrices $A_p$. Note that $\phi(y_{p,J})$ equals $x^A$ where only the
$p$-th component $A_p$ of $A$ is non-zero and in fact has all row sums
equal to $1,$ all column sums labelled by $J$ equal to $1,$ and all other
column sums equal to $0$. Thus $A_p$ can be thought of as the adjacency
matrix of a matching of the maximal size $k_p$ in the complete bipartite
graph with bipartition $[k_p] \bigsqcup \NN$. Thus the monomials in $\im
\phi$ form the Abelian monoid generated by such matchings (with $p$
varying). We call a monoid like this a \emph{matching monoid}. In the
next section we characterize these monomials among all monomials in the
$x_{p,l,j},$ and find a bound on the relations among the $\phi(y_{p,J})$.

\begin{figure}[h]
  \begin{center}
  \includegraphics[scale=.7]{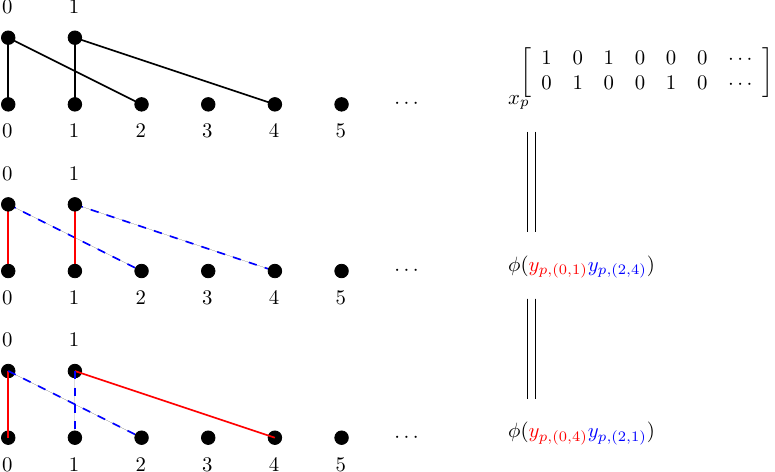}
  \caption{A bipartite graph on $[2] \bigsqcup \NN$ and its
  corresponding monomial $x_p^{A_p}$ (top).  This graph can
  be decomposed into matchings in two different ways (middle
  and bottom).  Each decomposition represents a monomial in the preimage $\phi^{-1}(x_p^{A_p})$.}
  \label{fig:matching}
  \end{center}
\end{figure}

\section{Relations among matchings} \label{sec:Relations}
We retain the setting at the end of the previous section: $Y$ is the
set of variables $y_{p,J}$ with $p$ running through $[N]$ and $J \in
\NN^{[k_p]}$ running through the $[k_p]$-tuples of {\em distinct}
natural numbers; $X$ is the set of variables $x_{p,l,j}$ with $p \in
[N], l \in [k_p], j \in \NN$, and $\phi$ is the map in \eqref{eq:phi}.
In this section we describe both the kernel and the image of $\phi$.
Note that if some $k_p$ is zero, then the corresponding (single)
variable $y_{p,()}$ is mapped by $\phi$ to $1$. The image of $\phi$
does not change if we disregard those $p$, and the kernel changes only
in that we forget about the generators $y_{p,()}-1$. Hence we may and
will assume that all $k_p$ are strictly positive.  The following lemma
gives a complete characterization of the $x^A$ in the image of $\phi$.

\begin{prop}
For an $[N]$-tuple $A \in \prod_{p \in [N]} \NN^{[k_p] \times \NN}$ the
monomial $x^A$ lies in the image of $\phi$ if and only if for all $p \in
[N]$ the matrix $A_p \in \NN^{[k_p] \times \NN}$ has all row sums equal
to a number $d_p \in \NN$ and all column sums less than or equal to $d_p$.
\end{prop}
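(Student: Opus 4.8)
The plan is to prove both implications directly. For the ``only if'' direction, suppose $x^A = \phi(m)$ for some monomial $m$ in the $y$-variables. Since $\phi$ sends each $y_{q,J}$ to a product of $x$-variables all having first index $q$, the preimage monomial $m$ splits as a product $m = \prod_{p \in [N]} m_p$ where $m_p$ involves only variables $y_{p,J}$, and correspondingly $A_p$ is determined entirely by $m_p$. So fix $p$ and write $m_p = \prod_J y_{p,J}^{c_J}$ with $d_p := \sum_J c_J$ the total degree. Applying \eqref{eq:phi}, the exponent of $x_{p,l,j}$ in $\phi(m_p)$ is $\sum_{J : j_l = j} c_J$, i.e.\ the number of factors (with multiplicity $c_J$) whose $l$-th entry equals $j$. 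Summing over $j$ gives row sum $\sum_j \sum_{J: j_l = j} c_J = \sum_J c_J = d_p$ for every row $l$, since each tuple $J$ contributes exactly once to row $l$. Summing the $j$-th column over $l$ counts, for each $J$ with multiplicity $c_J$, the number of indices $l$ with $j_l = j$; since the entries of $J$ are distinct this is at most $1$, so the $j$-th column sum is at most $\sum_J c_J = d_p$. This gives the stated necessary condition.

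For the ``if'' direction, the heart of the matter is to realize a matrix $A_p$ with constant row sums $d_p$ and column sums $\le d_p$ as a sum of $d_p$ matching matrices, each having all row sums equal to $1$ and all column sums $\le 1$ (these are exactly the adjacency matrices of size-$k_p$ matchings in the bipartite graph $[k_p] \sqcup \NN$, hence images of single $y$-variables). Equivalently, I want to decompose the bipartite multigraph on $[k_p] \sqcup \NN$ with $d_{p,l,j}$ edges between $l$ and $j$ into $d_p$ perfect matchings on the $[k_p]$-side. This is precisely the content of (the bipartite, edge-coloring form of) K\H{o}nig's theorem: a bipartite multigraph of maximum degree $d$ is $d$-edge-colorable; here the $[k_p]$-side is $d_p$-regular and the $\NN$-side has all degrees $\le d_p$, so the multigraph has maximum degree $d_p$, and each of the $d_p$ color classes meets every vertex of the $[k_p]$-side (since that side is $d_p$-regular), giving the desired matchings. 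Since the multigraph is finite (only finitely many columns are nonzero), there is no infinity issue, though one should remark that this is essentially Figure~\ref{fig:matching} in action.

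The main obstacle, such as it is, is purely expository: choosing the cleanest phrasing of the decomposition step. The options are (i) cite K\H{o}nig's edge-coloring theorem for bipartite graphs directly; (ii) give a one-line induction, repeatedly extracting a perfect matching on the $[k_p]$-side via Hall's theorem (Hall's condition holds because every vertex on the left has degree $d_p \ge 1$ and, after $t < d_p$ extractions, still has degree $d_p - t \ge 1$, while right-degrees stay bounded by the current max left-degree, so a neighborhood of $s$ left-vertices spans at least $s$ right-vertices by the standard degree-counting argument); or (iii) phrase it entirely in terms of doubly-``substochastic-with-equal-row-sums'' integer matrices and Birkhoff--von Neumann. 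I would go with the Hall's theorem induction, since it is self-contained, matches the combinatorial language already set up around matchings, and makes transparent why distinctness of the entries of $J$ is exactly what is needed. Everything else is bookkeeping across the index $p$.
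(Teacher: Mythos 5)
Your ``only if'' direction matches the paper's (which dispatches it in one line). For the ``if'' direction, the argument you actually carry out---view $A_p$ as a bipartite multigraph on $[k_p]\bigsqcup\NN$ and decompose it into $d_p$ matchings via K\H{o}nig's edge-colouring theorem, noting that $d_p$-regularity on the $[k_p]$-side forces every colour class to cover $[k_p]$---is correct and genuinely different from the paper's. The paper instead inducts on $d_p$, extracting one matching at a time: it applies Hall's theorem twice, once to get a matching covering $[k_p]$ and once to get a matching covering the set $S$ of columns with column sum exactly $d_p$, and then invokes \cite[Theorem 16.8]{schrijver2003combinatorial} to obtain a single matching covering both, so that subtracting it preserves the hypotheses with $d_p$ replaced by $d_p-1$. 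Your edge-colouring route gets the whole decomposition in one stroke and hides that bookkeeping inside K\H{o}nig's theorem; the paper's route is more self-contained (Hall plus the matching-union fact) and makes explicit why the saturated columns are the delicate point.

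However, the variant you say you would actually write up---option (ii), ``repeatedly extract a perfect matching on the $[k_p]$-side via Hall''---has a genuine gap, and it is exactly the point the set $S$ in the paper is designed to handle. After extracting an arbitrary matching covering $[k_p]$, the right-degrees do \emph{not} stay bounded by the new common left-degree: a column of sum $d_p$ missed by the matching still has sum $d_p$ while every row now sums to $d_p-1$, the residual matrix is no longer good, and the greedy process can get stuck. Concretely, for $k_p=2$ take
\[
A_p=\begin{bmatrix} 1 & 1 & 0 & 0 & \cdots\\ 0 & 1 & 1 & 0 & \cdots \end{bmatrix},
\]
which has row sums $2$ and column sums at most $2$; extracting the matching $\{(0,\mathrm{col}\ 0),\,(1,\mathrm{col}\ 2)\}$ leaves
\[
\begin{bmatrix} 0 & 1 & 0 & \cdots\\ 0 & 1 & 0 & \cdots \end{bmatrix},
\]
which admits no matching covering both rows. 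So the extracted matching must be chosen to cover all columns of maximal sum, which is precisely the extra step the paper takes (and what K\H{o}nig's theorem silently does for you). Keep the edge-colouring argument, or repair (ii) by requiring the matching to cover $S$ and citing the result that a matching covering $[k_p]$ and a matching covering $S$ can be combined into one covering both.
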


We call such $A$ {\em good}. Note that $d_p$ is unique since all $k_p$
are strictly positive. We call the vector $(d_p)_p$ the {\em multi-degree}
of $A$ and of $x^A$. 

\begin{re} \label{re:Compressed}
By replacing $\NN$ with $[n]$ for some natural number $n$ greater than
or equal to the maximum of the $k_p$, the proposition boils down to
the statement that for each $p$ the lattice polytope in $\RR^{[k_p]
\times [n]}$ with defining inequalities $\forall_{ij} a_{ij} \geq 0,$
$\forall_i \sum_j a_{ij}=1,$ and $\forall_j \sum_i a_{ij} \leq 1$ is
normal (in the case where $n=k_p$ this is the celebrated {\em Birkhoff
polytope}). This is a not new result; in fact, this polytope satisfies
a stronger property, namely, it is {\em compressed}. This follows,
for instance, from \cite[Theorem 2.4]{Sullivant06} or from the main
theorem of \cite{Ohsugi01}; see also \cite[Section 4.2]{Yamaguchi14}. For
completeness, we include a proof of the proposition using elementary
properties of matchings in bipartite graphs.
\end{re}

\begin{proof}
Let $x_p$ denote the vector of variables $x_{p,l,j}$ for $l \in [k_p]$
and $j \in \NN$. By definition of $\phi,$ the monomial $x^A$ lies in $\im \phi$
if and only if the monomial $x_p^{A_p}$ lies in $\im \phi$ for all $p
\in [N]$. Thus it suffices to prove that $x_p^{A_p}$ lies in $\im \phi$
if and only if all row sums of $A_p$ are equal, say to $d \in \NN,$
and all column sums of $A_p$ are at most $d$. The ``only if'' part is
clear, since every variable $y_{p,J}$ is mapped to a monomial $x_p^B$
where $B \in \NN^{[k_p] \times \NN}$ has all row sums $1$ and all column sums
at most $1$.  For the ``if'' part we proceed by induction on $d$: assume
that the statement holds for $d-1,$ and consider a matrix $A_p$ with row
sums $d$ and column sums $\leq d,$ where $d$ is at least $1$. Clearly, the ``if'' part is true in the case $d = 0$.

Think of $A_p$ as the adjacency matrix of a bipartite graph $\Gamma$ (with multiple
edges) with bipartition $[k_p]\bigsqcup\NN$ (see Figure~\ref{fig:matching}).  With this viewpoint in mind, we will invoke some standard results from combinatorics, and refer to \cite[Chapter 16]{schrijver2003combinatorial}.
The first observation is that $\Gamma$
contains a matching that covers all vertices in $[k_p]$. Indeed, otherwise,
by Hall's marriage theorem, after permuting rows and columns, $A_p$
has the block structure
\[
A_p= \begin{bmatrix}
A_{11} & 0 \\
A_{12} & A_{22}
\end{bmatrix}
\]
with $A_{11} \in \NN^{[l] \times [l-1]}$ for some $l, 1 \leq l \leq
k_p$. But then the entries of $A_{11}$ added row-wise add up to $ld,$
and added column-wise add up to at most $(l-1)d,$ a contradiction.
Hence $\Gamma$ contains a matching that covers all of $[k_p]$.  Next, let
$S \subseteq \NN$ be the set of column indices where $A_p$ has column sum
equal to the upper bound $d$. We claim that $\Gamma$ contains a matching
that covers all of $S$. Indeed, otherwise, again by Hall's theorem, after
permuting rows and columns $A_p$ has the structure
\[
A_p=\begin{bmatrix}
A_{11} & A_{12} \\
0 & A_{22}
\end{bmatrix}
\]
with $A_{11} \in \NN^{[l-1] \times [l]}$ for some $l, 1 \leq l \leq |S|$;
here the first $l$ columns correspond to a subset of the original $S$. Now the
entries of $A_{11}$ added columnwise yield $l d,$ while the entries of
$A_{11}$ added rowwise yield at most $(l-1)d,$ a contradiction.

Finally, we invoke a standard result in matching theory (see
\cite[Theorem 16.8]{schrijver2003combinatorial}), namely that
since $\Gamma$ contains a matching that covers all of $[k_p]$ and a matching
that covers all of $S,$ it also contains a matching that
covers both. Let $B$
be the adjacency matrix of this matching, so that $B$ has all row sums
$1$ and all column sums $\leq 1,$ with equality at least in the columns
labelled by $S$.  Then $A'_p:=A_p-B$ satisfies the induction hypothesis
for $d-1,$ so $x_p^{A_p'} \in \im \phi$. Also, $x_p^B= \phi(y_{p,J})
,$ where $j_a \in \NN$ is the neighbour of $a \in [k_p]$ in the
matching given by $B$. Hence, $x_p^{A_p}=x_p^{A_p'} x_p^B \in \im \phi$ as claimed.
\end{proof}

This concludes the description of the image of $\phi$. For the kernel,
we quote the following result.

\begin{thm}[Theorem 2.1 of \cite{Yamaguchi14}]\label{thm:kernel-of-phi}
The kernel of $\phi$ from \eqref{eq:phi} is generated by binomials in
the $y_{p,J}$ of degree at most $3$.
\end{thm}

Indeed, for each fixed $p$, and replacing $\NN$ by some $[n]$ with
$n \geq k_p$, the monomial map \eqref{eq:phi} captures precisely the
generalization of the Birkhoff model studied in \cite{Yamaguchi14},
where each voter choses $k_p$ among $n$ candidates. Then their Theorem
2.1 yields that the kernel is generated in degrees $2$ and $3$. Since
this holds for each $n \geq k_p$, it also holds for $\NN$ instead of
$[n]$. Moreover, taking the union over all $p$ of sets of generators for
each individual $p$ yields a set of generators for the kernel of $\phi$.
A straightforward consequence of the theorem is the following.

\begin{cor} \label{cor:kernel-of-phi}
The kernel of $\phi$ from \eqref{eq:phi} is generated by finitely many
$\Inc(\NN)$-orbits of binomials.
\end{cor}


\section{Noetherianity of matching monoid rings}
\label{sec:Noetherianity1}

By Corollary~\ref{cor:kernel-of-phi} and Proposition~\ref{prop:Reduction},
Main Theorem follows from the following proposition.

\begin{prop}\label{incnoeth}
The ring $S = R[x^A \mid A \in \prod_{p \in [N]} \NN^{[k_p] \times \NN}\; \mathrm{ good}]$ is $\Inc(\NN)$-Noetherian.
\end{prop}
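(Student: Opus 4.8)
The plan is to follow the strategy of \cite{Cohen87} and \cite{hillar2012finite}: exhibit the monomials of $S$ as a monoid $\mon$ on which $\Inc(\NN)$ acts, equip $\mon$ with a natural partial order $\preceq$ that is compatible with the $\Inc(\NN)$-action (meaning $a \preceq b$ implies $\pi a \preceq \pi b$, and moreover $a \preceq b$ implies $a \mid b$ in the monoid, i.e.\ $b = a \cdot c$ for some $c \in \mon$), and prove that $\preceq$ is a well-partial-order. Once that is established, $\Inc(\NN)$-Noetherianity of $S$ follows by a now-standard argument: any $\Inc(\NN)$-stable monomial ideal is, by the well-partial-order property applied to a minimal generating antichain, generated by finitely many $\Inc(\NN)$-orbits; and the passage from monomial ideals to arbitrary ideals is handled by a Gröbner-type argument using an $\Inc(\NN)$-compatible monomial order, exactly as in \cite[Section 4]{hillar2012finite}. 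Since $N=1$ here (the section title), I will write $\mon$ for the monoid of good matrices $A \in \NN^{[k] \times \NN}$ under entrywise addition, where $k := k_1$; the general $N$ case adds only bookkeeping, which is deferred to Section~\ref{sec:Noetherianity2}.

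The heart of the matter is the well-partial-order statement, so let me be precise about the order. For good matrices $A, B$ of multi-degrees $d_A, d_B$, declare $A \preceq B$ if there exists $\pi \in \Inc(\NN)$ such that the matrix $\pi \cdot A$ (which has the columns of $A$ spread out along the column-index set according to $\pi$, with zero columns inserted elsewhere) satisfies $\pi \cdot A \leq B$ entrywise; equivalently, $B - \pi\cdot A$ is again a good matrix, necessarily of multi-degree $d_B - d_A$. This is visibly $\Inc(\NN)$-compatible and implies divisibility. To see it is a well-partial-order I would proceed in two stages. First, the finitely many good matrices of any fixed multi-degree $d$ with no zero columns have bounded width (at most $kd$ nonzero columns, since every column sum is at most $d$ and the total is $kd$), so for fixed $d$ there are only finitely many of them up to the $\Inc(\NN)$-action — i.e., the set of good matrices is partitioned into finitely many $\Inc(\NN)$-orbits for each $d$. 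Second, and this is the real content, I would encode a good matrix of width $w$ as a word of length $w$ over the \emph{finite} alphabet $\Sigma := \{ c \in \NN^{[k]} \mid 1 \leq \sum_i c_i \} $ restricted to columns relevant at degree $d$ — more precisely, the alphabet of possible column vectors is infinite only because $d$ is unbounded, so I must be more careful: use the alphabet $\Sigma_d$ of column vectors with entries summing to at most $d$. The subword order on words over $\Sigma_d$ is a well-partial-order by Higman's Lemma; the order $\preceq$ restricted to good matrices of a fixed multi-degree is essentially the induced subword order (deleting a column of $B$ is allowed only when the remaining columns still form a good matrix — a constraint on the ambient multi-degree, not on the combinatorics of insertion). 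The subtlety is that $\preceq$ must compare matrices of \emph{different} multi-degrees, so Higman over a single finite alphabet is not immediately enough.

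The way around this — and the step I expect to be the main obstacle — is to find a single, uniform, finite-alphabet encoding valid across all degrees. The trick, as in \cite{Cohen87}, is that although column vectors can have arbitrarily large entries, a good matrix of multi-degree $d$ is a \emph{sum of $d$ matchings} by the Proposition characterizing the image of $\phi$; equivalently, it is obtained by superimposing $d$ partial matchings of $[k]$ into $\NN$. So I would instead encode a good matrix as a word over the finite alphabet of \emph{matchings-with-a-marked-column} — record, column by column from left to right, for each nonzero column which subset of $[k]$ is matched there and with what multiplicities up to $d$; but to kill the $d$-dependence, record instead the \emph{multiset of incidence patterns}, using that each column's data is a vector in $\NN^{[k]}$ and we can lexicographically order and then apply Higman to words whose letters are themselves finite objects. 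Concretely: fix the total multi-degree only up to the observation that in a well-partial-order argument we may restrict to an antichain, and an infinite antichain would have to use unboundedly many degrees or unbounded width; rule out unbounded width at fixed degree (done above), then run a Higman/Nash-Williams minimal-bad-sequence argument on the sequence of matrices, peeling off a first column and recursing. I expect the clean execution to require stating the order on $\mon$ so that $A \preceq B$ allows both column deletion and pointwise decrease, and then verifying the minimal-bad-sequence argument goes through; this verification, together with pinning down the right finite alphabet, is where the care is needed, and it is exactly the analogue of the well-partial-order proofs in \cite[Section 4]{Cohen87} and \cite{hillar2012finite}, adapted from squarefree incidence matrices to the matching monoid.
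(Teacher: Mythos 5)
Your high-level skeleton matches the paper's: take the $\Inc(\NN)$-divisibility order $\preceq$ on the good matrices, prove it is a well-partial-order, and then invoke the standard monoid-ring argument (the paper's Proposition~\ref{wpoNoeth}) to get $\Inc(\NN)$-Noetherianity. But there is a genuine gap, and it sits exactly at the point you flag as ``where the care is needed.'' First, your definition of the order contains an error: the conditions ``$\pi A \leq B$ entrywise'' and ``$B - \pi A$ is a good matrix'' are \emph{not} equivalent. Entrywise domination forces the row sums of $B-\pi A$ to equal $d_B-d_A$, but goodness additionally requires every \emph{column} sum of $B-\pi A$ to be at most $d_B-d_A$, and this can fail (the paper gives an explicit $2\times\NN$ example with $\pi=\mathrm{id}$, $A\le B$, yet $A\not\preceq B$). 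If you work with the weaker entrywise order, your own requirement that $a\preceq b$ imply $b=a\cdot c$ with $c\in\mon$ fails, so the passage from wpo to Noetherianity breaks; if you work with the correct order, then the column-sum constraint on the difference is precisely what prevents a direct Higman-type argument, and your proposed encodings (finite alphabet $\Sigma_d$, sums of $d$ matchings, ``multiset of incidence patterns'') never address it.

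Second, the well-partial-order proof --- which is the entire content of the proposition --- is left as an unresolved sketch. The paper's resolution is concrete: append to each good $A$ an extra balancing row to form $\tilde A$, so that $A\preceq B$ becomes equivalent to $\pi\tilde A\le\tilde B$ entrywise, where columns of $\tilde A$ outside $\im\pi$ are the \emph{trivial} columns $(0,\dots,0,d_A)^T$ rather than zero; then truncate, call a column \emph{bad} when its last entry is at most $d_A/2$, observe that a good matrix has at most $2k$ bad columns, and induct on the number of bad columns. The base case is handled by passing to a subsequence in which either the degree and number of nontrivial columns are constant (forcing a Higman embedding to be an equality on nontrivial columns) or the degrees at least double at each step (so nontrivial, non-bad columns of the larger matrix automatically dominate the inserted trivial columns). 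None of this mechanism --- the augmented row, the trivial columns, the bad-column bound, the bounded/doubling dichotomy --- appears in your proposal, and without something playing that role the Nash--Williams/Higman argument you gesture at does not go through for the order you actually need.
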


Let $G \subset \prod_{p\in[N]}\NN^{[k_p] \times \NN}$ be the set of good ($N$-tuples of) matrices, so the monomials of $S$ are precisely $x^A$ for $A \in G$. The monoids $\Sym(\NN)$ and $\Inc(\NN)$
act on $G$ by permuting or shifting columns, so we have $\pi
x^A=x^{\pi A}$, where the $\pi(j)$-th column of the matrix $(\pi A)_p$ equals the
$j$-th column of $A_p$. Let $d_A = (d_{A,p})_p \in \NN^{[N]}$ denote the multi-degree of $A$; recall that this means that all row sums of $A_p$ are equal to $d_{A,p}$. To prove Noetherianity we will
define a partial order $\preceq$ on $G$ and prove that $\preceq$ is a
well-partial-order. Thus we need some basic results from order theory.

A partial order $\preceq$ on a set $P$ is a {\em well-partial-order}
(or {\em wpo}) if for every infinite sequence $p_1,p_2,\ldots$
in $P,$ there is some $i < j$ such that $p_i \preceq p_j$; see
\cite{kruskal1972theory} for alternative characterisations. For
instance, the natural numbers with the usual total order $\leq$ is
a well-partial-order, and so is the componentwise partial order on
the Cartesian product of any finite number of well-partially-ordered
sets. Combining these statements yields Dickson's Lemma~\cite{Dickson13}
that $\NN^k$ is well-partially-ordered. This can be seen as a special
case of Higman's Lemma~\cite{Higman52}, for a beautiful proof of which
we refer to~\cite{nash1963well}.

\begin{lm}[Higman's Lemma]\label{higmans}
 Let $(P, \preceq)$ be a well-partial-order and let $P^* := \bigcup_{l=0}^\infty P^l,$ the set of all finite sequences of elements of $P$.  Define the partial order $\preceq'$ on $P^*$ by $(a_0,\ldots,a_{l-1}) \preceq' (b_0,\ldots,b_{m-1})$ if and only if there exists a strictly increasing function $\rho: [l] \to [m]$ such that $a_j \preceq b_{\rho(j)}$ for all $j \in [l]$.  Then $\preceq'$ is a well-partial-order.
\end{lm}

Our interest in well-partial-orders stems from the following
application. Consider a commutative monoid $\mon$ with an action
of a (typically non-commutative) monoid $\Pi$ by means of monoid
endomorphisms.  We suggestively call the elements of $\mon$
monomials.  Assume that we have a {\em $\Pi$-compatible
monomial order} $\leq$
on $\mon$, i.e., a well-order that satisfies $a < b \Rightarrow
ac < bc$ and $a < b \Rightarrow \pi a < \pi b$ for all $a,b,c \in
\mon$ and $\pi  \in \Pi$. Then it follows that the divisibility
relation $|$ defined by $a|b$ if there exists a $c \in \mon$
with $ac=b$ is a partial order, and also that $a \leq \pi a$ for all $a
\in \mon$. Define a third partial order, the
{\em $\Pi$-divisibility order}, $\preceq$ on $\mon$ by $a
\preceq b$ if there exists a $\pi \in \Pi$ and a $c \in \mon$ such
that $c \pi a = b$. A straightforward computation shows that $\preceq$
is, indeed, a partial order---antisymmetry follows using $a \leq \pi a$.

\begin{prop}\label{wpoNoeth}
If $\preceq$ is a well-partial-order, then for any
Noetherian ring $R$, the $R$-algebra $R[\mon]$ is
$\Pi$-Noetherian.
\end{prop}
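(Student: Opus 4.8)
The plan is to prove Proposition~\ref{wpoNoeth} by imitating the standard proof that a polynomial ring over a Noetherian ring is Noetherian (Hilbert's basis theorem via Gr\"obner-style leading-term arguments), but carried out in the $\Pi$-equivariant setting and using the hypothesis that $\preceq$ is a well-partial-order in place of Dickson's Lemma. So let $I \subseteq R[\mon]$ be a $\Pi$-stable ideal; I want to show $I$ is generated, as an $R[\mon]\Pi$-module, by finitely many elements.

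First I would set up leading terms. Since $\leq$ is a $\Pi$-compatible monomial order (in particular a well-order) on $\mon$, every nonzero $f \in R[\mon]$ has a well-defined leading monomial $\mathrm{lm}(f) \in \mon$ and leading coefficient $\mathrm{lc}(f) \in R \setminus \{0\}$. For each monomial $a \in \mon$ let $L_a \subseteq R$ be the set of leading coefficients of elements $f \in I$ with $\mathrm{lm}(f) = a$, together with $0$; this is an ideal of $R$. The key monotonicity observations are: (i) if $a | b$, say $b = ca$, then multiplying by $c$ shows $L_a \subseteq L_b$; and (ii) if $\pi \in \Pi$, then applying $\pi$ (a monoid endomorphism, and $I$ is $\Pi$-stable) shows $L_a \subseteq L_{\pi a}$, using that $\pi$ preserves $\leq$ so that leading monomials transform correctly and $\mathrm{lc}$ is unchanged. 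Hence $a \preceq b$ implies $L_a \subseteq L_b$: the function $a \mapsto L_a$ is order-preserving from $(\mon, \preceq)$ to the ideals of $R$ ordered by inclusion.

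Next I would extract a finite generating set. Consider the collection $\{L_a : a \in \mon\}$ of ideals of $R$. Since $R$ is Noetherian, this collection has finitely many maximal elements, say attained at $a_1, \ldots, a_r$, and every $L_a$ is contained in one of these (here one uses that $\preceq$ is a wpo: any ideal $L_a$ sits below some $L_{a_i}$ because in the wpo $(\mon,\preceq)$ there is no infinite strictly ascending chain, so every element is $\preceq$ some $\preceq$-maximal element among those realizing maximal ideals --- more carefully, wpo guarantees finitely many minimal elements of any subset, and one argues with the subset of monomials whose $L$-ideal is not below one already chosen). For each $i$, pick finitely many elements $g_{i,1}, \ldots, g_{i,s_i} \in I$ with $\mathrm{lm}(g_{i,j}) = a_i$ whose leading coefficients generate $L_{a_i}$ as an ideal of $R$. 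Let $G$ be the finite set of all these $g_{i,j}$; I claim $G$ generates $I$ as an $R[\mon]\Pi$-module. Given $0 \neq f \in I$, write $a = \mathrm{lm}(f)$; then $a \preceq a_i$ for some $i$ (wait --- I want $a_i \preceq a$; re-choose the $a_i$ so that they are $\preceq$-minimal among monomials realizing the finitely many maximal ideals, so that $L_a = L_{a_i}$ forces $a_i \preceq a$), so $a = c\,\pi a_i$ for some $c \in \mon$, $\pi \in \Pi$. Then $\mathrm{lc}(f) \in L_a = L_{a_i}$, so $\mathrm{lc}(f) = \sum_j \lambda_j \mathrm{lc}(g_{i,j})$, and $f - \sum_j \lambda_j\, c\, \pi(g_{i,j})$ lies in $I$ and has strictly smaller leading monomial (using that $\pi$ and multiplication by $c$ send $a_i$ to $a$ and preserve the order, so all the $\pi(g_{i,j})$ contribute leading monomial exactly $a$). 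Since $\leq$ is a well-order, Noetherian induction (or transfinite descent) on $\mathrm{lm}(f)$ shows $f$ lies in the $R[\mon]\Pi$-submodule generated by $G$.

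The main obstacle --- and the place to be careful --- is the combinatorial bookkeeping in the middle step: pinning down exactly how the wpo hypothesis on $\preceq$ lets one pass from ``$R$ Noetherian, so finitely many maximal $L_a$'' to ``finitely many monomials $a_i$ that dominate all leading terms in the required sense ($a_i \preceq a$ whenever $L_a = L_{a_i}$ is maximal).'' Concretely: let $T \subseteq \mon$ be the set of monomials $a$ for which $L_a$ is maximal in $\{L_b\}$; by the wpo property $T$ has finitely many $\preceq$-minimal elements $a_1,\ldots,a_r$, and one must check every $a \in \mon$ satisfies $a_i \preceq a$ for some $i$ with, moreover, $L_{a_i} = L_a$ when this is to be used --- this needs the argument that any $a$ has $L_a \subseteq L_{a'}$ for some maximal $a'$, hence $L_a \subseteq L_{a_i}$ and then the reduction step only needs a $g$ with leading monomial dividing (in the $\preceq$ sense) $a$ and leading coefficient in a generating set of an ideal containing $\mathrm{lc}(f)$; handling the case $L_a \subsetneq L_{a_i}$ requires including, for each maximal $L_{a_i}$, generators realizing not just $L_{a_i}$ but enough smaller leading monomials --- this is cleanly resolved by instead choosing $G$ to realize, for the finitely many maximal ideals $M_1,\ldots,M_r$ among the $L_a$, a finite set of monomials $\{a_{i,q}\}$ that is $\preceq$-cofinal below in the relevant sense, which is again a finite set by the wpo. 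Once this selection lemma is stated precisely the rest is the routine leading-term descent above, and the $\Pi$-compatibility axioms $a<b \Rightarrow ac<bc$ and $a<b\Rightarrow \pi a < \pi b$ are exactly what make every reduction step strictly decrease the leading monomial.
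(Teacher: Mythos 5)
Your overall strategy---leading-coefficient ideals $L_a$ indexed by monomials, monotone along the $\Pi$-divisibility order, followed by a leading-term descent---is sound, and it is essentially the Hillar--Sullivant/Aschenbrenner--Hillar argument that the paper itself merely cites for this proposition. However, the central selection step, which you yourself flag as ``the place to be careful,'' is asserted rather than proved, and the justifications you sketch for it do not hold as stated. First, ``since $R$ is Noetherian, this collection has finitely many maximal elements'' is not a consequence of Noetherianity: ACC gives the \emph{existence} of maximal elements of $\{L_a\}$, not finiteness of their number (think of the infinitely many maximal ideals of $\ZZ$). Finiteness does hold here, but only via the wpo together with order-preservation: if infinitely many distinct maximal values were attained, two realizing monomials would be $\preceq$-comparable, forcing an inclusion between distinct maximal values. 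Second, your parenthetical ``in the wpo $(\mon,\preceq)$ there is no infinite strictly ascending chain'' is false---a wpo excludes infinite strictly \emph{descending} chains and infinite antichains, not ascending chains ($\NN$ is a wpo). Third, and most seriously: even granting finitely many maximal values $M_1,\ldots,M_r$ and taking the finitely many $\preceq$-minimal monomials realizing each $M_i$, an arbitrary monomial $a$ need not lie $\preceq$-above any of them, so your finite set need not provide reducers at monomials $\preceq a$ whose leading coefficients generate $L_a$; the closing sentence (``cleanly resolved by \ldots\ a finite set by the wpo'') is precisely the missing lemma, and the wpo alone does not deliver it in such a two-stage fashion.

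What you need, and what is true, is: for an order-preserving map $a \mapsto L_a$ from a wpo into the ideals of a Noetherian ring there is a finite set $S \subseteq \mon$ with $L_a = \sum_{s \in S,\, s \preceq a} L_s$ for every $a$. Its proof must interleave the two finiteness hypotheses rather than apply them in separate stages: if no finite $S$ suffices, choose successively monomials $a_1,a_2,\ldots$ and elements $x_n \in L_{a_n} \setminus \sum_{i<n,\ a_i \preceq a_n} L_{a_i}$; by the wpo extract an infinite $\preceq$-ascending subsequence $a_{n_1} \preceq a_{n_2} \preceq \cdots$; the ascending chain of ideals $\sum_{j \le k} L_{a_{n_j}}$ stabilizes by ACC in $R$, and stabilization forces some $x_{n_{k+1}}$ into $\sum_{j \le k} L_{a_{n_j}}$, a contradiction. (Equivalently, one runs the minimal-bad-sequence argument directly on elements of the $\Pi$-stable ideal, as in the sources the paper cites: Hillar--Sullivant for fields, with Aschenbrenner--Hillar supplying exactly this leading-coefficient bookkeeping for general Noetherian $R$.) With this lemma in place, your monotonicity observations and the descent on leading monomials go through as written, so the gap is fillable---but as it stands the proposal leaves its key step unproved.
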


\begin{proof}
This statement was proved in \cite{hillar2012finite} for the
case where $R$ is a field.  The more general case can be
proved with the same argument by incorporating work done in
\cite{Aschenbrenner07}.
\end{proof}

Note that the monoid $\{x^A\mid A \in G\}$ that we are considering
here can be given a monomial order which respects the
$\Inc(\NN)$-action. For example, take the lexicographic order, where
the variables $x_{p,i,j}$ are ordered by their indices: $x_{p,i,j} <
x_{p',i',j'}$ if and only if $p < p'$; or $p = p'$ and $j < j'$; or $p =
p',$ $j = j',$ and $i < i'$.

The $\Inc(\NN)$-divisibility order gives a partial order $\preceq$ on
the set $G$ of good ($N$-tuples of) matrices by $A \preceq B$ if and only if there is a
monomial $x^{C} \in S$ and $\pi \in \Inc(\NN)$ such that $x^{C}\pi(x^{A})
= x^{B},$ or equivalently there is $\pi \in \Inc(\NN)$ such that $B -
\pi A \in G$.
Note that $A \preceq B$ not only implies there is some $\pi \in
\Inc(\NN)$ such that all $A_{p,i,j} \leq B_{p,i,\pi(j)},$ but
additionally that all ($N$-tuples of) column sums of $B- \pi A$ are at
most $d_B - d_A \in \NN^{[N]}$.  This prevents us from applying
Higman's Lemma directly to $(G,\preceq)$.  To encode this condition on
column sums, for any $A \in G,$ let $\tilde{A} \in
\prod_{p\in[N]}\NN^{[k_p+1] \times \NN}$ be the $N$-tuple of matrices
such that for all $p\in[N]$, the first $k_p$ rows of $\tilde{A}_p$ are
equal to $A_p$, and the last row of $\tilde{A}_p$ is such that all
column sums equal $d_{A,p}$:
 \[ \tilde{A}_{p,i,j} = \left\{ \begin{array}{ll}  A_{p,i,j} &
\text{for }\; i < k_p, \text{ and}\\
                            d_{A,p} - \sum_{l = 0}^{k_p-1} A_{p,l,j} &
\text{for }\; i = k_p. \end{array}\right. \]
We let $\tilde{G}$ be the set of $N$-tuples of matrices of the form
$\tilde{A}$ with $A \in G$. It is precisely the set of $N$-tuples of
matrices of the form $\tilde{A} \in
\prod_{p\in[N]}\NN^{[k_p+1]\times\NN}$ with the property that there
exists a $d_A \in \NN^{[N]}$ such that for each $p\in[N]$ the first
$k_p$ row sums of $A_p$ are equal to $d_{A,p}$ and all column sums of
$A_p$ are equal to $d_{A,p}$.  Since $A \in G$ has only finitely many
$N$-tuples of non-zero columns, $\tilde{A}$ will have all but finitely
many $N$-tuples of columns equal to
$((0,\ldots,0,d_{A,p})^T)_{p\in[N]}$.  Such $N$-tuples of columns will
be called {\em trivial} (of degree $d_A$).  The $N$-tuple of $j$th
columns of $\tilde{A}$ will be denoted $\tilde{A}_{\cdot\cdot j}$.  We
define the action of $\Inc(\NN)$ on $\tilde{G}$ as $\pi(\tilde{A}) =
\widetilde{\pi(A)}$. Note that for any $j \notin \im(\pi)$, the column
$(\pi \tilde{A})_{\cdot\cdot j}$ is trivial of degree $d_A$, rather than uniformly zero.

\begin{prop}\label{gtilde}
 For $A,B \in G,$ $A \preceq B$ if and only if there is $\pi \in \Inc(\NN)$ such that $\pi\tilde{A} \leq \tilde{B}$ entry-wise.
\end{prop}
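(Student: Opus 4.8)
The plan is to unwind both sides of the claimed equivalence into concrete statements about columns and column sums, and then to observe that these two statements are literally the same: the whole point of appending the $(k+1)$st row to $A$ is to encode, as an entrywise inequality, the condition on column sums hidden in the definition of $\preceq$. Throughout I would write $d:=d_A$ and $e:=d_B$, and for a matrix $M\in\NN^{[k]\times\NN}$ let $c_j(M):=\sum_{l\in[k]}M_{l,j}$ be its $j$th column sum. I would first recall that for $\pi\in\Inc(\NN)$ one has $(\pi A)_{i,j}=A_{i,\pi^{-1}(j)}$ when $j\in\im(\pi)$ and $0$ otherwise; in particular $\pi A$ is again good with the same multi-degree $d$ (row sums are untouched, column sums are permuted among finitely many columns), and $c_j(\pi A)=0$ for $j\notin\im(\pi)$.

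Next I would spell out what $A\preceq B$ means. By definition it holds iff there is $\pi\in\Inc(\NN)$ with $B-\pi A\in G$. For the entries of $B-\pi A$ to be nonnegative we need $\pi A\leq B$ entrywise; granting that, every row of $B-\pi A$ sums to $e-d$ (so automatically $d\leq e$), and $B-\pi A$ lies in $G$ iff moreover each of its column sums is at most $e-d$, i.e.\ $c_j(B)-c_j(\pi A)\leq e-d$ for all $j$. Thus $A\preceq B$ is equivalent to the existence of $\pi\in\Inc(\NN)$ satisfying \emph{both} $\pi A\leq B$ entrywise \emph{and} $c_j(B)-c_j(\pi A)\leq e-d$ for every $j$.

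Finally I would compute $\pi\tilde A$ and compare it with $\tilde B$. From the defining formula $\pi\tilde A=\widetilde{\pi A}$ we get $(\pi\tilde A)_{i,j}=(\pi A)_{i,j}$ for $i<k$ and $(\pi\tilde A)_{k,j}=d-c_j(\pi A)$, while $\tilde B_{k,j}=e-c_j(B)$. Hence $\pi\tilde A\leq\tilde B$ entrywise is equivalent to the conjunction of $(\pi A)_{i,j}\leq B_{i,j}$ for all $i<k$ and all $j$ (that is, $\pi A\leq B$ entrywise) together with $d-c_j(\pi A)\leq e-c_j(B)$ for all $j$ (that is, $c_j(B)-c_j(\pi A)\leq e-d$ for all $j$). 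These are exactly the two conditions isolated in the previous paragraph, and the computation is uniform in $j$ — for $j\notin\im(\pi)$ it reads $d\leq e$ on the trivial columns — so the two assertions hold for precisely the same set of $\pi$, which is the proposition. The argument is a pure translation, so I do not expect a genuine obstacle; the only points needing care are the column-sum bookkeeping and remembering that $\tilde A$ is well-defined only because $A$ good forces $c_j(A)\leq d$, so that the appended entries $d-c_j(A)$ are nonnegative.
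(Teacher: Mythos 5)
Your argument is correct and is essentially the paper's own proof: both unwind $A\preceq B$ as the existence of $\pi$ with $B-\pi A\in G$ and observe that the entrywise inequality in the appended row, via $\tilde B_{k,j}-(\pi\tilde A)_{k,j}=(d_B-d_A)-\sum_{i<k}(B-\pi A)_{i,j}$, encodes exactly the column-sum bound, while the first $k$ rows encode nonnegativity. (Only your parenthetical about $j\notin\im(\pi)$ is slightly loose --- there the condition is $d_A\le d_B-c_j(B)$, not just $d_A\le d_B$ --- but your uniform formula $d_A-c_j(\pi A)\le d_B-c_j(B)$ already covers those columns correctly, so this is harmless.)
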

\begin{proof}
The condition that $(\pi\tilde{A})_{p,i,j} \leq \tilde{B}_{p,i,j}$ for
all $p \in [N]$, all $i  < k_p$, and all $j \in \NN$ is equivalent to the condition that $B - \pi A$ is non-negative.  Using the fact that
 \[ \tilde{B}_{p,k_p,j} - (\pi\tilde{A})_{p,k_p,j} = (d_{B,p} - d_{A,p}) - \sum_{i = 0}^{k_p-1} (B_p - \pi A_p)_{i,j}, \]
the condition that $\tilde{B}_{p,k_p,j} - (\pi\tilde{A})_{p,k_p,j} \geq 0$ for all $p \in [N]$ and all $j \in \NN$ is equivalent to the condition that every $N$-tuple of column sums of $B - \pi A$ is less than or equal to $d_B - d_A$.  Therefore $\pi\tilde{A} \leq \tilde{B}$ if and only if $B - \pi A \in G$.
\end{proof}

\begin{ex}\label{ex:preceqandsqpreceq}
 Let $A$ and $B$ be the following good matrices in $\NN^{[2] \times \NN}$:
  \[ A = \begin{bmatrix} 3 & 0 & 0 & 0 & 0 & \cdots & \\
                         0 & 1 & 1 & 1 & 0 & \cdots & \end{bmatrix}, \quad
     B = \begin{bmatrix} 3 & 1 & 0 & 0 & 0 & \cdots & \\
                         0 & 2 & 1 & 1 & 0 & \cdots & \end{bmatrix}.
  \]
 Note that $\pi A \leq B$ when $\pi$ is the identity, however $A \not\preceq B$.  Consider
  \[ \tilde{A} = \begin{bmatrix} 3 & 0 & 0 & 0 & 0 & \cdots & \\
                                 0 & 1 & 1 & 1 & 0 & \cdots & \\
                                 0 & 2 & 2 & 2 & 3 & \cdots & \end{bmatrix}, \quad
     \tilde{B} = \begin{bmatrix} 3 & 1 & 0 & 0 & 0 & \cdots & \\
                                 0 & 2 & 1 & 1 & 0 & \cdots & \\
                                 1 & 1 & 3 & 3 & 4 & \cdots & \end{bmatrix},
  \]
 and note that there is no $\pi \in \Inc(\NN)$ such that $\pi \tilde{A} \leq \tilde{B}$.
\end{ex}

We will work with finite truncations of $N$-tuples of matrices in $\tilde{G}$. Let $\matr$
be the set of $N$-tuples of matrices $A \in \bigcup_{\ell=0}^\infty \prod_{p\in[N]}\NN^{[k_p+1] \times
[\ell]}$ such that there exists $d_A \in \NN^{[N]}$ such that for all $p$, all column sums of $A_p$ are equal
to $d_{A,p}$ and the first $k_p$ row sums are {\em at most} $d_{A,p}$; we call $d_A$ the
{\em multi-degree} of $A$.  Note that the condition on row sums is relaxed,
which will allow us to freely remove columns from matrices while still
remaining in the set $\matr$. For $A \in \matr$ the number of columns of $A$
is called the {\em length} of $A$ and denoted $\ell_A$.
We give $\matr$ the
partial order $\preceq$ defined as follows.  For $A,B \in \matr,$ $A \preceq
B$ if and only if there is a strictly increasing map $\rho:[\ell_A]
\to [\ell_B]$ such that $\rho A \leq B$.  Just as in $\tilde{G},$ here
$\rho A$ is defined by $(\rho A)_{\cdot\cdot j} = A_{\cdot\cdot\rho^{-1}(j)}$ for $j \in
\im(\rho),$ and $(\rho A)_{\cdot\cdot j}$ trivial (of degree $d_A$) for $j \in [\ell_B] \setminus
\im(\rho)$.  For an $N$-tuple of matrices $A$ and a set $J \subset \NN,$ let $A_{\cdot\cdot J}$
denote the $N$-tuple of matrices obtained from $A$ by taking only the columns $A_{\cdot\cdot j}$
with $j \in J$.

Some care must be taken in the definition of $\matr$ since we allow matrices with no columns.  In all other cases, the degree of $A \in \matr$ is uniquely determined by its entries.  However for the length 0 case the degree is arbitrary, so we will consider $\matr$ as having a distinct length 0 element $Z^d$ with degree $d$ for each $d \in \NN^{[N]},$ and we define $Z^d \preceq A$ if and only if $d \leq d_A$.  Additionally, define $A_{\cdot\cdot\emptyset} = Z^{d_A}$.

\begin{de}
 For $A \in \matr,$ the $N$-tuple of $j$th columns of $A$ is {\em bad} if for some $p \in [N]$, we have $A_{p,k_p,j} < d_{A,p}/2$.  If $A_{p,k_p,j} < d_{A,p}/2$, we will call $j$ a {\em bad index} of $A$ (with respect to $p$). Let $\matr_t$ denote the set of $N$-tuples of matrices in $\matr$ with exactly $t$ bad indices.
\end{de}

We will use induction on $t$ to show that $(\matr_t,\preceq)$ is well-partially ordered for all $t \in \NN$.  This will in turn be used to prove that $(\matr, \preceq)$ and then $(\tilde{G},\preceq)$ are well-partially ordered.  First we prove the base case:

\begin{prop}\label{basecase}
 $(\matr_0,\preceq)$ is well-partially ordered.
\end{prop}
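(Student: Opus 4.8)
We need to show $(\matr_0,\preceq)$ is a well-partial-order, where $\matr_0$ consists of those finite matrices $A\in\NN^{[k+1]\times[\ell]}$ with all column sums equal to some degree $d_A$, the first $k$ row sums at most $d_A$, and \emph{no} bad column, i.e.\ $A_{k,j}>d_A/2$ for every column $j$. Let me think about what this "no bad column" condition buys us.

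Let me reconsider. The condition $A_{k,j} > d_A/2$ for every column means the bottom entry exceeds half the degree in every column. Since the column sums are all $d_A$, in any single column the entries in rows $0,\dots,k-1$ sum to $d_A - A_{k,j} < d_A/2$. So the "top part" of each column is small relative to the bottom. In particular, summing over all $\ell$ columns, the total of all entries in rows $0,\dots,k-1$ is $< \ell\, d_A/2$; and since each of these $k$ row sums is at most $d_A$, actually the top part is genuinely constrained — but I don't immediately see a bound on $\ell$. So $\matr_0$ is still infinite. I'd need a more structural argument.

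**The plan.** The natural strategy, mirroring the paper's use of Higman's Lemma elsewhere, is to view a matrix $A\in\matr_0$ of length $\ell_A$ as a finite sequence of columns $(A_{\cdot 0},\dots,A_{\cdot(\ell_A-1)})$, each column being an element of the set $C_d := \{v\in\NN^{[k+1]} : \sum_i v_i = d,\ v_k > d/2,\ v_i\le d \text{ for } i<k\}$ — but the degree $d$ varies, which is the crux of the difficulty. So first I would handle the degree: given an infinite sequence $A^{(1)},A^{(2)},\dots$ in $\matr_0$, by Dickson's Lemma (well-ordering of $\NN$) pass to an infinite subsequence along which $d_{A^{(1)}}\le d_{A^{(2)}}\le\cdots$ (after also dealing with the degenerate length-$0$ elements $Z^d$, for which $Z^d\preceq A$ iff $d\le d_A$, so these interleave harmlessly). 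Now I claim that along such a nondecreasing-degree subsequence, the column order becomes a well-partial-order \emph{uniformly}, and then Higman's Lemma applied to sequences of columns finishes the job. The key reduction: for $v\in C_d$ and $w\in C_e$ with $d\le e$, I want to define $v\trianglelefteq w$ meaning the trivial-column-filled comparison used in $\matr$ — namely $v$ can be "promoted to degree $e$" and then dominated by $w$ entrywise. Concretely, $\rho A\le B$ in the definition of $\preceq$ on $\matr$ forces, on columns in the image of $\rho$, that $A_{\cdot\rho^{-1}(j)} \le B_{\cdot j}$ with the understanding that the bottom row absorbs the degree difference; the condition $A_{i,j}\le B_{i,j}$ for $i<k$ plus "$B$ has degree $e\ge d$" automatically gives the bottom-row inequality since $B_{k,j} = e - \sum_{i<k}B_{i,j} \ge e - \sum_{i<k}(d\cdot\text{something})$ — I would spell this out carefully, but the upshot is that $v\trianglelefteq w$ reduces to the finitely-many inequalities $v_i\le w_i$ for $i<k$.

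**The main obstacle.** The delicate point is showing that the per-column relation $\trianglelefteq$ (comparing a column of degree $d$ with a column of degree $e\ge d$) is a well-partial-order on $\bigsqcup_{d} C_d$ — or rather on the restriction to any set of columns whose degrees are bounded, or along a nondecreasing degree subsequence. Here is where "no bad column" is essential: because $v_k > d/2$, the top part $v' := (v_0,\dots,v_{k-1})$ satisfies $\sum_i v'_i < d/2 < v_k$, i.e.\ the top part is strictly smaller in total than the bottom entry; equivalently $d = v_k + \sum v'_i < 2v_k$, so $d$ is \emph{bounded by twice the bottom entry} — but that still allows $d\to\infty$. I think the right move is: along the subsequence, look at whether the degrees are bounded or unbounded. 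If bounded, only finitely many degrees occur, so columns live in a finite set $\bigcup_{d\le D}C_d$ (finite because each $C_d$ is finite), and Higman's Lemma on sequences over a finite (hence wpo) set applies immediately. If the degrees are unbounded, I claim comparability becomes \emph{easy}: given $v\in C_d$ and any target degree $e\gg 0$, since we only need $v_i\le w_i$ for $i<k$ and $\sum_{i<k}v_i<d/2$, once $e$ is large... no — this needs the target column $w$ to actually have large top entries, which need not happen. So the honest approach is: apply Higman's Lemma to the sequence of \emph{top parts} $A'^{(n)} \in (\NN^{[k]})^{*}$ — these are finite sequences over $\NN^k$, which is a wpo by Dickson, so Higman gives indices $m<n$ and a strictly increasing $\rho$ with $A'^{(m)}_{\cdot j}\le A'^{(n)}_{\cdot\rho(j)}$ for all $j$ — and then check that, together with $d_{A^{(m)}}\le d_{A^{(n)}}$ (from the subsequence) and the trivial-column convention on columns outside $\im\rho$, this $\rho$ witnesses $A^{(m)}\preceq A^{(n)}$ in $\matr_0$. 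Verifying this last implication — that entrywise domination of the top $k$ rows plus $d_{A^{(m)}}\le d_{A^{(n)}}$ really yields $\rho A^{(m)}\le A^{(n)}$ including the bottom row and including the trivial filler columns — is exactly the computation in Proposition \ref{gtilde}, so I would cite that style of argument. I expect the genuine content, and the only place the "$\matr_0$" (no bad columns) hypothesis is used, to be in confirming that the trivial-column convention is compatible: a trivial column $(0,\dots,0,e)^T$ has bottom entry $e > e/2$, so it is \emph{not} bad, hence inserting trivial filler columns keeps us inside $\matr_0$ and the comparison makes sense — whereas if bad columns were allowed, filling with trivial columns could fail to be dominated. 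That compatibility check is the subtle step; everything else is Dickson + Higman bookkeeping.
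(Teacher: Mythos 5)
Your key reduction is the false step. You claim that for columns of degrees $d\le e$ the comparison ``reduces to the finitely many inequalities $v_i\le w_i$ for $i<k$,'' and later that entrywise domination of the top $k$ rows via $\rho$ together with $d_{A^{(m)}}\le d_{A^{(n)}}$ already yields $\rho A^{(m)}\le A^{(n)}$. This is not true, and the no-bad-column hypothesis does not rescue it: take $k=1$, let $A$ be the single column $(0,3)^T$ and $B$ the single column $(1,2)^T$, both of degree $3$ and both in $\matr_0$ (bottom entries $3>3/2$ and $2>3/2$). The top rows satisfy $0\le 1$ and the degrees agree, yet $A\not\preceq B$ because $3>2$ in the bottom row. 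Citing Proposition~\ref{gtilde} does not help — it asserts the opposite of what you need: the extra $(k+1)$-st row was introduced precisely because domination in the first $k$ rows plus a degree inequality does not capture the column-sum constraint, which is exactly what the example following Proposition~\ref{gtilde} illustrates. So Higman's Lemma must be applied to full columns in $\NN^{[k+1]}$, bottom row included, and then the real remaining difficulty is the one you half-notice but misidentify.

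That difficulty is the unmatched (trivial filler) positions: to upgrade a Higman embedding $\rho$ to $A\preceq B$ one needs $B_{k,j}\ge d_A$ for every $j\in[\ell_B]\setminus\im(\rho)$, whereas ``$B$ has no bad columns'' only gives $B_{k,j}>d_B/2\ge d_A/2$; the compatibility check you flag (that a trivial column is itself not bad) is beside the point. The paper closes this gap by a case split after applying Higman to full columns: if the degrees along the sequence are bounded, it passes to a subsequence with constant degree \emph{and} a constant number of non-trivial columns; equal degrees force matched columns to be equal, and the count then forces every unmatched column of the larger matrix to be trivial, so $\rho B^{(r)}=B^{(s)}$. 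If the degrees are unbounded, it passes to a subsequence with $d_{B^{(s+1)}}\ge 2d_{B^{(s)}}$, so that at unmatched columns $B^{(s)}_{k,j}>d_{B^{(s)}}/2\ge d_{B^{(r)}}$ — this doubling is where the absence of bad columns is genuinely used. Your nondecreasing-degree subsequence is too weak for the unbounded case, and your bounded case (``Higman over a finite set applies immediately'') omits the counting argument needed to control the unmatched columns; as written, the proof does not go through.
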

\begin{proof}
 Let $A^{(1)},A^{(2)},\ldots$ be any infinite sequence in $\matr_0$. We will show that there are $r < s$ such that $A^{(r)} \preceq A^{(s)}$.

 Fix $p \in [N]$. There are now two possibilities: either the degrees of the elements of the sequence $A_p^{(1)},A_p^{(2)},\ldots$ are bounded by some $d_p \in \NN$, or they are not. In the former case, it follows that the number of non-trivial columns in any $A_p^{(r)}$ is bounded by $d_pk_p$. Then there is a subsequence $B_p^{(1)},B_p^{(2)},\ldots$ of $A_p^{(1)},A_p^{(2)},\ldots$ such that every element has the same degree and same number of non-trivial columns. In the latter case, $A_p^{(1)},A_p^{(2)},\ldots$ has a subsequence with strictly increasing degree and moreover a subsequence $B_p^{(1)},B_p^{(2)},\ldots$ with the property that $d_{B^{(s+1)},p} \geq 2d_{B^{(s)},p}$ for all $s \in \NN$.

In either case, without loss of generality, we replace $A^{(1)},A^{(2)},\ldots$ by $B^{(1)},B^{(2)},\ldots$. We repeat this procedure for all $p\in [N]$, and we find that $A^{(1)},A^{(2)},\ldots$ contains a subsequence $B^{(1)},B^{(2)},\ldots$ such that for all $p \in [N]$, one of the following two statements holds.

\begin{description}
\item[1] Both $d_{B^{(t)},p}$ and the number of non-trivial columns in $B_p$ are constant.

\item[2] We have $d_{B^{(t+1)},p} \geq 2d_{B^{(t)},p}$ for all $t$.
\end{description}

It now suffices to show that there are $r < s$ such that $B^{(r)} \preceq B^{(s)}$ for all $r < s$. Define the partial order $\sqsubseteq$ on $\matr_0$ by $A \sqsubseteq B$ if and only if there exists strictly increasing $\rho:[\ell_A] \to [\ell_B]$ such that $A_{\cdot\cdot j} \leq B_{\cdot\cdot\rho(j)}$ for all $j \in [\ell_A]$.  By Higman's Lemma (Lemma~\ref{higmans}), $\sqsubseteq$ is a wpo. This means that there exist $r < s$ such that $B^{(r)} \sqsubseteq B^{(s)}$. Fix such a pair $r < s$. We will show that $B^{(r)} \preceq B^{(s)}$.

Let $\rho: [\ell_{B^{(r)}}] \to [\ell_{B^{(s)}}]$ be a strictly increasing map that witnesses $B^{(r)} \sqsubseteq B^{(s)}$. We claim that it also witnesses $B^{(r)} \preceq B^{(s)}$. For this, we have to show that $\rho B^{(r)} \leq B^{(s)}$. By the properties of $\sqsubseteq$, we already have $(\rho B^{(r)})_{\cdot\cdot \rho(j)} \leq B^{(s)}_{\cdot\cdot\rho(j)}$, which is to say that it suffices to show that for all $j\notin \im(\rho)$, we have $d_{B^{(r)}} \leq (B^{(s)}_{p,k_p,j})_{p\in [N]}$.

Let $p \in [N]$. Suppose we are in the case that both $d_{B^{(t)},p}$ and the number of non-trivial columns in $B_p$ are constant. Since $\rho$ must map non-trivial columns of $B_p^{(r)}$ to non-trivial columns of $B_p^{(s)}$, we conclude that if $j\notin \im(\rho)$, then the $j$-th column of $B_p^{(s)}$ is trivial, and hence $(B^{(s)}_{p,k_p,j}) = d_{B^{(s)},p}$. But the latter equals $d_{B^{(r)},p}$, so certainly $d_{B^{(r)},p} \leq (B^{(s)}_{p,k_p,j})$.

Alternatively, suppose we have $d_{B^{(t+1)},p} \geq 2d_{B^{(t)},p}$ for all $t$. Since $B_p^{(s)}$ has no bad columns, we have
\[ B^{(s)}_{p,k_p,j} \geq \frac{1}{2}d_{B^{(s)},p} \geq d_{B^{(r)},p}. \]
This is exactly what we wanted to show.

So in both cases, we find that $d_{B^{(r)},p} \leq B^{(s)}_{p,k_p,j}$ for all $j \notin \im(\rho)$. This is true for all $p$, so we have $d_{B^{(r)}} \leq (B^{(s)}_{p,k_p,j})_{p\in [N]}$. We conclude that $B^{(r)} \preceq B^{(s)}$, as we wanted to show.
\end{proof}

\begin{prop}\label{indstep}
 $(\matr_t,\preceq)$ is well-partially ordered for all $t \in \NN$.
\end{prop}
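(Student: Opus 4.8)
The plan is to prove Proposition~\ref{indstep} by induction on $t$, with Proposition~\ref{basecase} serving as the base case $t=0$. So assume $(\matr_{t-1},\preceq)$ is a well-partial-order and let $A^{(1)},A^{(2)},\ldots$ be an infinite sequence in $\matr_t$; I must find $r<s$ with $A^{(r)} \preceq A^{(s)}$. As in the base case, the first step is to pass to a subsequence on which the degree behaves controllably: either it is bounded by some $d$, or (after passing to a subsequence) it satisfies $d_{A^{(s+1)}} \geq 2 d_{A^{(s)}}$. I expect the unbounded-degree case to go through exactly as in Proposition~\ref{basecase}, since there the trivial columns $(0,\ldots,0,d)^T$ of a low-degree matrix are dominated entrywise by \emph{every} column (bad or not) of a much-higher-degree matrix; the presence of bad columns does not interfere with that argument. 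So the real work is the bounded-degree case.

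In the bounded-degree case, the idea is to split off the bad columns and handle them with Higman's Lemma, while handling the remaining ("good-column") part with the inductive hypothesis. Concretely, for each $r$ write $A^{(r)}$ as determined by the ordered list of its columns; among the $\ell_{A^{(r)}}$ columns, exactly $t$ are bad. Since the degree is bounded by $d$, the number of \emph{non-trivial} columns is bounded by $d(k+1)$ (hence also the number of distinct column \emph{vectors} appearing is bounded), and every bad column has entry-vector drawn from a fixed finite set. After passing to a subsequence I may assume all $A^{(r)}$ have the same degree $d$ and the same number of non-trivial columns. Now encode each $A^{(r)}$ as a finite word over the alphabet $P := C \sqcup (\matr_{t-1})$, where $C$ is the (finite) set of possible bad column-vectors of degree $d$, and where between consecutive bad columns (and before the first / after the last) we record the intervening block of good columns as a single element of $\matr_{t-1}$ — that block, together with its bookkeeping, lies in $\matr$ and has strictly fewer than $t$ bad columns, in fact zero, but I record it as an element of the larger well-partially-ordered set $\matr_{t-1}$ so the induction applies. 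Equivalently, and perhaps more cleanly: cut $A^{(r)}$ at its $t$ bad columns into $t+1$ good-column blocks $A^{(r)} = M_0^{(r)} \mid b_1^{(r)} \mid M_1^{(r)} \mid \cdots \mid b_t^{(r)} \mid M_t^{(r)}$ with $M_i^{(r)} \in \matr_0 \subseteq \matr_{t-1}$ and $b_i^{(r)} \in C$. Since $t$ is fixed, this realizes $A^{(r)}$ as an element of the finite product $\matr_{t-1} \times C \times \matr_{t-1} \times \cdots \times C \times \matr_{t-1}$ ($t+1$ copies of $\matr_{t-1}$, $t$ copies of $C$), which is a well-partial-order under the componentwise order by Proposition~\ref{basecase}, the inductive hypothesis, and the fact that products of wpo's are wpo. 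Hence there are $r<s$ with $M_i^{(r)} \preceq M_i^{(s)}$ for all $i$ and $b_i^{(r)} = b_i^{(s)}$ for all $i$.

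It remains to check that these componentwise inequalities assemble into a single $\rho:[\ell_{A^{(r)}}] \to [\ell_{A^{(s)}}]$ witnessing $A^{(r)} \preceq A^{(s)}$. Each $M_i^{(r)} \preceq M_i^{(s)}$ gives a strictly increasing map $\rho_i$ from the columns of $M_i^{(r)}$ into the columns of $M_i^{(s)}$ with $\rho_i M_i^{(r)} \leq M_i^{(s)}$; concatenating the $\rho_i$'s and sending the $i$th bad column of $A^{(r)}$ to the $i$th bad column of $A^{(s)}$ produces a strictly increasing $\rho$ (strict monotonicity across block boundaries holds because the blocks appear in the same order on both sides). On columns inside a good block we get $\leq$ from $\rho_i M_i^{(r)} \leq M_i^{(s)}$; on the image bad columns we get equality $b_i^{(r)} = b_i^{(s)}$; and on columns of $A^{(s)}$ not in the image of $\rho$, the corresponding column of $\rho A^{(r)}$ is by definition trivial, namely $(0,\ldots,0,d)^T$, and any column of $A^{(s)}$ dominates that since $A^{(s)}$ also has degree $d$ (its column sums are all $d$, so its last entry is $\leq d$ — wait, I need $\geq$; in fact all column sums equal $d$ forces the last entry to be $d$ minus the nonnegative first-$k$-row contributions, so trivial columns of $\rho A^{(r)}$ are exactly $\leq$ every column of $A^{(s)}$ in the last coordinate only when the first $k$ coordinates of that column vanish — which need not happen). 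Here I must be slightly careful: trivial means $(0,\ldots,0,d)^T$, and a general column $v$ of $A^{(s)}$ has $v_k = d - \sum_{i<k} v_i \leq d$, so $(0,\ldots,0,d)^T \leq v$ \emph{fails} unless $\sum_{i<k}v_i = 0$. The fix is the standard one used already in the base case: arrange, by the subsequence choice, that $\rho$ hits \emph{all} non-trivial columns of $A^{(s)}$, so that every $j \notin \im(\rho)$ has $A^{(s)}_{\cdot j}$ trivial, hence equal to $(0,\ldots,0,d)^T = (\rho A^{(r)})_{\cdot j}$. This is exactly the counting argument from Proposition~\ref{basecase}: equal degree plus equal number of non-trivial columns plus $\rho_i M_i^{(r)} \leq M_i^{(s)}$ forces those inequalities to be equalities on non-trivial columns and forces $\rho$ to surject onto the non-trivial columns. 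With that in hand, $\rho A^{(r)} \leq A^{(s)}$, so $A^{(r)} \preceq A^{(s)}$, completing the induction.

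The main obstacle, as the discussion above signals, is purely bookkeeping: correctly matching up the block-decomposition so that the concatenated $\rho$ is \emph{strictly} increasing and so that the leftover columns of $A^{(s)}$ are genuinely trivial rather than merely having small last entry. Both points are resolved by the same equal-degree / equal-number-of-non-trivial-columns reduction already used in Proposition~\ref{basecase}; nothing genuinely new is needed beyond Higman's Lemma (via finite products), the inductive hypothesis, and care with the definition of "trivial column". The unbounded-degree branch is a verbatim repetition of the corresponding branch in the base case.
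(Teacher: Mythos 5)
Your bounded-degree branch is essentially correct (and its block decomposition at the $t$ bad columns is a reasonable variant of what the paper does), but the unbounded-degree branch has a genuine gap. You dismiss it by asserting that the trivial columns $(0,\ldots,0,d_{A^{(r)}})^T$ of the lower-degree matrix are dominated entrywise by \emph{every} column, bad or not, of a matrix of much larger degree. That is false: a bad column of $A^{(s)}$ is only constrained by $A^{(s)}_{k,j} \leq \frac{1}{2}d_{A^{(s)}}$, so its last entry can be as small as $0$, and then $(0,\ldots,0,d_{A^{(r)}})^T \leq A^{(s)}_{\cdot j}$ fails whenever $d_{A^{(r)}}>0$. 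The inequality $B^{(s)}_{k,j} > \frac{1}{2}d_{B^{(s)}} \geq d_{B^{(r)}}$ in Proposition~\ref{basecase} uses precisely that $B^{(s)}$ has \emph{no} bad columns; for $t>0$, Higman's Lemma gives you no control over whether the embedding it produces covers the bad columns of $A^{(s)}$, so a bad column can be left outside the image and the trivial fill at that position need not be dominated. Hence the unbounded case is not a verbatim repetition of the base case, and as written your induction step is incomplete.

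The repair is available inside your own construction: compare the $t$ bad columns not by equality in a finite alphabet (which is what forces you into the bounded/unbounded dichotomy) but by the componentwise order on $\NN^{[k+1]}$, which is a wpo by Dickson's Lemma. Then no case distinction on degree is needed: if $M_i^{(r)} \preceq M_i^{(s)}$ for all $i$ and $b_i^{(r)} \leq b_i^{(s)}$ entrywise, gluing the witnesses gives a strictly increasing $\rho$ with $\rho A^{(r)} \leq A^{(s)}$, because every column of $A^{(s)}$ missed by $\rho$ lies inside some block $M_i^{(s)}$, and the block relation already forces that column to dominate the trivial column of degree $d_{M_i^{(r)}} = d_{A^{(r)}}$ (a block has the same degree as the whole matrix, since column sums are unchanged). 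This is in effect the paper's proof: it splits only at the \emph{last} bad column, compares that single column in $(\NN^{[k+1]},\leq)$, handles the prefix via the induction hypothesis on $\matr_{t-1}$ and the suffix in $\matr_0$, and never needs to distinguish bounded from unbounded degree. One further small slip: the sets $\matr_t$ are disjoint, so $\matr_0 \subseteq \matr_{t-1}$ is not literally true; what you actually need there is only that $(\matr_0,\preceq)$ is a wpo, which is Proposition~\ref{basecase}.
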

\begin{proof}
 The base case, $t = 0,$ is given by Proposition~\ref{basecase}.  For $t > 0,$ assume by induction that $(\matr_{t-1},\preceq)$ is well-partially ordered.
 For any $A \in \matr_t,$ let $j_A$ be the largest bad index of $A$.
Then $A$ can be decomposed into three parts: the $N$-tuple of matrices
of all $N$-tuples of columns before $j_A$, $A_{\cdot\cdot j_A}$ itself, and the $N$-tuple of matrices of all $N$-tuples of columns after $j_A$.  This decomposition is represented by the map
  \[ \delta:\matr_t \to \matr_{t-1} \times \prod_{p\in[N]}\NN^{[k_p+1]} \times \matr_0 \]
  \[ A \mapsto (A_{\cdot\cdot\{0,\ldots,j_A-1\}}, A_{\cdot\cdot j_A}, A_{\cdot\cdot\{j_A+1,\ldots,\ell_A-1\}}). \]
 Let the partial order $\sqsubseteq$ on $\matr_{t-1} \times \prod_{p\in[N]}\NN^{[k_p+1]} \times \matr_0$ be the product order of the wpos $(\matr_{t-1},\preceq),$ $(\NN^{[k+1]},\leq)$ and $(\matr_0,\preceq)$.  Note that the product order of any finite number of wpos is also a wpo.  Suppose for some $A,B \in \matr_t$ that $\delta(A) \sqsubseteq \delta(B)$.  This implies that $A_{\cdot\cdot j_A} \leq B_{\cdot\cdot j_B}$ and that there exist strictly increasing maps $\rho$ and $\sigma$ such that $\rho (A_{\cdot\cdot[j_A]}) \leq B_{\cdot\cdot[j_B]}$ and $\sigma (A_{\cdot\cdot\{j_A+1,\ldots,\ell_A-1\}}) \leq B_{\cdot\cdot\{j_B+1,\ldots,\ell_B-1\}}$.  We combine these into a single strictly increasing map $\tau:[\ell_A] \to [\ell_B]$ defined by
  \[ \tau(j) = \left\{ \begin{array}{ll}  \rho(j) & \text{for }\; 0 \leq j < j_A \\
                                              j_B & \text{for }\; j = j_A \\
                            \sigma(j-j_A-1)+j_B+1 & \text{for }\; j_A < j < \ell_A \end{array},\right. \]
 illustrated in Figure \ref{fig:po}.  Then $\tau A \leq B$ so $A \preceq B$.    Since $\sqsubseteq$ is a wpo, $(\matr_t, \preceq)$ is also a wpo.
\end{proof}

\begin{figure}[h]
  \centering
  \includegraphics[width=.8\columnwidth]{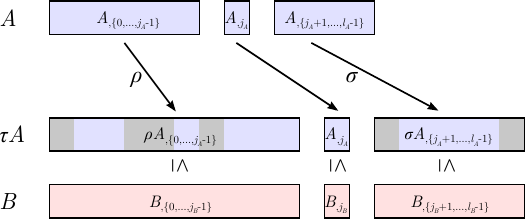}
  \caption{$\delta(A) \sqsubseteq \delta(B)$ implies $A \preceq B$.}
  \label{fig:po}
\end{figure}

\begin{prop}\label{mprimewpo}
 $(\matr,\preceq)$ is well-partially ordered.
\end{prop}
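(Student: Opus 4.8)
The plan is to realise $(\matr,\preceq)$ as a \emph{finite} union of subsets on each of which $\preceq$ restricts to a well-partial-order, and then to invoke the elementary fact that a finite union of wpo subsets of a poset is again a wpo: given an infinite sequence, pigeonhole forces an infinite subsequence into one of the pieces, where two comparable terms are found. Higman's Lemma enters only indirectly, through Proposition~\ref{indstep}.

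The key numerical input is a uniform bound on the number of bad indices for matrices of positive degree: if $A\in\matr$ has $d_A\geq 1$, then $A$ has at most $2k$ bad indices. Indeed, for each bad index $j$ we have $A_{k,j}\leq d_A/2$, hence $\sum_{i<k}A_{i,j}=d_A-A_{k,j}\geq d_A/2$; summing over the bad indices and using that $\sum_{j}\sum_{i<k}A_{i,j}=\sum_{i<k}\bigl(\sum_j A_{i,j}\bigr)\leq k\,d_A$, each of the first $k$ row sums being at most $d_A$, we get $t\cdot d_A/2\leq k\,d_A$, where $t$ is the number of bad indices, and therefore $t\leq 2k$ since $d_A>0$. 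Consequently every $A\in\matr$ with $d_A\geq 1$ lies in $\matr_0\cup\matr_1\cup\cdots\cup\matr_{2k}$, and each $(\matr_t,\preceq)$ is a wpo by Proposition~\ref{indstep}.

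It remains to handle the degree-zero part $D:=\{A\in\matr\mid d_A=0\}$, which consists of the length-$0$ element $Z^0$ together with the all-zero matrix of each positive length; these are exactly the elements of $\matr$ that can have more than $2k$ bad indices, since in fact all of their columns are bad. I claim $(D,\preceq)$ is order-isomorphic to $(\NN,\leq)$ via $A\mapsto\ell_A$. There is precisely one element of each length, and for $A,B\in D$ one has $A\preceq B$ iff there is a strictly increasing $\rho:[\ell_A]\to[\ell_B]$ with $\rho A\leq B$; such a $\rho$ exists precisely when $\ell_A\leq\ell_B$, and when it exists $\rho A\leq B$ holds automatically because in degree $0$ every column of $\rho A$---be it an honest column of $A$ or a trivial column---is the zero column and hence lies below the corresponding column of $B$. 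Since $(\NN,\leq)$ is a wpo, so is $(D,\preceq)$.

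Combining the two cases, $\matr=D\cup\matr_0\cup\matr_1\cup\cdots\cup\matr_{2k}$ exhibits $\matr$ as a finite union of subsets each carrying a wpo, whence $(\matr,\preceq)$ is a wpo. The one point that requires care---and the single place the argument would break if overlooked---is that $\matr$ is \emph{not} the finite union $\matr_0\cup\cdots\cup\matr_{2k}$: the degenerate degree-$0$ matrices have unboundedly many bad columns and must be treated separately. That separate treatment is painless precisely because among those matrices $\preceq$ degenerates to comparison of lengths, which is manifestly a well-partial-order.
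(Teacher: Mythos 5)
Your proof is correct and follows essentially the same route as the paper: bound the number of bad indices by $2k$, use pigeonhole to force an infinite subsequence of any given sequence into a single class $\matr_t$ with $t\leq 2k$, and apply Proposition~\ref{indstep}. Your separate treatment of the degree-zero elements is a small but genuine refinement, since the paper's blanket claim that every element of $\matr$ has at most $2k$ bad columns fails for all-zero matrices of length greater than $2k$ (there $d_A=0$ and every column is bad); that degenerate case is exactly as easy to dispatch as you indicate, with $\preceq$ reducing to comparison of lengths.
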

\begin{proof}
 For any $A \in \matr,$ if $j$ is a bad index of $A,$ then for some $p\in [N]$, we have $d_{A,p}/2 > \sum_{i \in [k_p]} A_{p,i,j}$.  Letting $J_p \subset \NN$ be the set of bad indices of $A$ with respect to $p$ and let $J \subset \NN$ be the union of the $J_p$. Then
  \[ |J_p|\frac{d_{A,p}}{2} < \sum_{j \in J_p}\sum_{i \in [k_p]} A_{p,i,j} \leq \sum_{i \in [k_p]} \sum_{j \in \NN} A_{p,i,j} \leq k_pd_A \]
 with the last inequality due to the row sum condition on $A_p$.  Therefore $|J_p| \leq 2k_p-1$, and hence $|J| \leq 2\sum_{p\in [N]}k_p - N$.

 Let $A^{(1)},A^{(2)},\ldots$ be any infinite sequence in $\matr$.  Since the numbers of bad $N$-tuples of columns of elements of $\matr$ are bounded by $2\sum_{p\in [N]}k_p - N$ there exists a subsequence which is contained in $\matr_t$ for some $0 \leq t \leq 2\sum_{p\in [N]}k_p - N$.  By Proposition~\ref{indstep} there is $r < s$ with $A^{(r)} \preceq A^{(s)}$.
\end{proof}

\begin{prop}\label{mwpo}
 $(G,\preceq)$ is well-partially ordered.
\end{prop}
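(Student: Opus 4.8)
The plan is to deduce Proposition~\ref{mwpo} from Proposition~\ref{mprimewpo} (that $(\matr,\preceq)$ is a well-partial-order), using the passage $A \mapsto \tilde{A}$ together with the criterion of Proposition~\ref{gtilde} as the bridge between $G$ and $\tilde{G}$. Recall that for $A \in G$ the matrix $\tilde{A}$ has all but finitely many columns \emph{trivial}, i.e.\ equal to $(0,\ldots,0,d_A)^T$, and that by Proposition~\ref{gtilde} one has $A \preceq B$ in $G$ exactly when $\pi\tilde{A} \leq \tilde{B}$ entrywise for some $\pi \in \Inc(\NN)$. So it suffices, starting from an infinite sequence $A^{(1)}, A^{(2)}, \ldots$ in $G$, to produce indices $r < s$ and $\pi \in \Inc(\NN)$ with $\pi\tilde{A}^{(r)} \leq \tilde{A}^{(s)}$.

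First I would truncate. For each $r$ choose $\ell_r \in \NN$ so large that every non-trivial column of $\tilde{A}^{(r)}$ has index $<\ell_r$, and set $B^{(r)} := \tilde{A}^{(r)}_{\cdot[\ell_r]}$. Since $A^{(r)}$ is good, all column sums of $B^{(r)}$ equal $d_{A^{(r)}}$ and its first $k$ row sums are at most $d_{A^{(r)}}$, so $B^{(r)} \in \matr$ with degree $d_{A^{(r)}}$. Applying Proposition~\ref{mprimewpo} to the sequence $B^{(1)}, B^{(2)}, \ldots$ yields $r < s$ with $B^{(r)} \preceq B^{(s)}$, witnessed by a strictly increasing $\rho : [\ell_r] \to [\ell_s]$ with $\rho B^{(r)} \leq B^{(s)}$ (in the degenerate case $\ell_r = 0$ the relation $B^{(r)} \preceq B^{(s)}$ means by definition $d_{A^{(r)}} \leq d_{A^{(s)}}$, which is all we need below). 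Next I would extend $\rho$ to an element $\pi \in \Inc(\NN)$ by letting $\pi$ agree with $\rho$ on $[\ell_r]$ and be the shift $j \mapsto j + (\ell_s - \ell_r)$ for $j \geq \ell_r$; here $\ell_s \geq \ell_r$ since $\rho$ embeds $[\ell_r]$ increasingly into $[\ell_s]$, so $\pi$ is a well-defined strictly increasing map.

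It then remains to verify $\pi\tilde{A}^{(r)} \leq \tilde{A}^{(s)}$ column by column, after which Proposition~\ref{gtilde} gives $A^{(r)} \preceq A^{(s)}$ and the proof is complete. On columns with index in $\im(\rho)$ this is exactly the inequality $\rho B^{(r)} \leq B^{(s)}$; on columns with index $< \ell_s$ not in $\im(\rho)$ the left-hand column is trivial of degree $d_{A^{(r)}}$, and it is dominated by $\tilde{A}^{(s)}$ there because the corresponding (trivial) column of $\rho B^{(r)}$ already is; and on columns of index $\geq \ell_s$ both sides are trivial, so the inequality reduces to $d_{A^{(r)}} \leq d_{A^{(s)}}$, which we read off from $B^{(r)} \preceq B^{(s)}$ (by comparing column sums, or directly in the length-$0$ case). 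The step requiring the most care---and the only real obstacle---is the choice of truncation length: $\ell_r$ must be large enough to keep \emph{every} non-trivial column of $\tilde{A}^{(r)}$, so that the shift part of $\pi$ never pushes a non-trivial column of $\tilde{A}^{(r)}$ past index $\ell_s$, where $\tilde{A}^{(s)}$ is forced to be trivial. This is also exactly why one must route the argument through $\tilde{G}$ and Proposition~\ref{gtilde}, rather than attempting to compare $A^{(r)}$ and $A^{(s)}$ directly, as the example following Proposition~\ref{gtilde} shows can fail.
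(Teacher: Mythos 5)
Your proposal is correct and follows essentially the same route as the paper: truncate each $\tilde{A}^{(r)}$ just past its last non-trivial column, apply Proposition~\ref{mprimewpo} to the truncations, extend the witnessing increasing map to an element of $\Inc(\NN)$, handle the remaining (trivial) columns via $d_{A^{(r)}} \leq d_{A^{(s)}}$, and conclude with Proposition~\ref{gtilde}. The only difference is that you make explicit a particular extension of $\rho$ and the degenerate length-$0$ case, details the paper leaves implicit.
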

\begin{proof}
 Let $A^{(1)}, A^{(2)}, \ldots$ be any infinite sequence in $G$.  Each $A^{(r)}$ has some $j_r > 0$ such that all $N$-tuples of columns $A^{(r)}_{\cdot\cdot m}$ are zero for $m \geq j_r$.  Consider the sequence $\tilde{A}^{(1)}_{\cdot\cdot[j_1]}, \tilde{A}^{(2)}_{\cdot\cdot[j_2]},\ldots$ in $\matr$ obtained by truncating each $\tilde{A}^{(r)}$ to the first $j_r$ $N$-tuples of columns.  By Proposition~\ref{mprimewpo} there is some $r < s$ and $\rho:[j_r] \to [j_s]$ such that $\rho \tilde{A}^{(r)}_{\cdot\cdot[j_r]} \leq \tilde{A}^{(s)}_{\cdot\cdot[j_s]}$.  Note that this implies $d_{A^{(r)}} \leq d_{A^{(s)}}$.  Extend $\rho$ to some $\pi \in \Inc(\NN)$ so then
  \[ (\pi \tilde{A}^{(r)})_{\cdot\cdot[j_s]} = \rho (\tilde{A}^{(r)}_{\cdot\cdot[j_r]}) \leq \tilde{A}^{(s)}_{\cdot\cdot[j_s]}. \]
  The remaining $N$-tuples of columns of $\pi \tilde{A}^{(r)}$ and $\tilde{A}^{(s)}$ are trivial, so $\pi \tilde{A}^{(r)} \leq \tilde{A}^{(s)}$ follows from the fact that $d_{A^{(r)}} \leq d_{A^{(s)}}$.  Therefore $A^{(r)} \preceq A^{(s)}$ by Proposition~\ref{gtilde}.
\end{proof}

Now we can apply Proposition~\ref{wpoNoeth} to the monoid $\{x^A\mid A \in G\}$ which proves that the ring $R[x^A\mid A \in G]$ is $\Inc(\NN)$-Noetherian.  This concludes the proof of Proposition~\ref{incnoeth}.

\section{Buchberger's algorithm for matching monoid algebras} \label{sec:Computational}

Assume the general setting of Proposition~\ref{wpoNoeth}: $\monoid$ is
a monoid with $\Pi$-action and $\Pi$-compatible monomial order $\leq$.
For a polynomial $f$ and an ideal $I$ in $\mring{K}$, we can define
$\LM(f),$ $\LC(f),$ $\ini(I),$ division with remainder, and the concept of
equivariant Gr\"obner basis from \cite{Brouwer09e}; all relative to the
monomial order $\leq$. 
We now derive a version of Buchberger's algorithm
for computing such a Gr\"obner basis, under an additional assumption.
For $a,b\in \monoid$ we define the set of {\em least common
multiples}
\[
\lcm(a,b) = \{l \in \monoid: a|l, b|l \text{ and } (a|l',b|l',l'|l \Rightarrow l'=l)\}.
\]
We require the following
variant
of
conditions EGB3 and EGB4 from \cite{Brouwer09e}:
\begin{quote}
EGB34. For all $f,g \in \mring{K}$, the set of triples in
$\monoid\times \Pi f \times \Pi g$ defined by
\[
T_{f,g} = \{ (l',f',g') \mid f' \in \Pi f,\; g' \in \Pi g,\; l'\in\lcm(\LM(f'),\LM(g')) \},
\]
is a union of a finite number of $\Pi$-orbits:
\[
T_{f,g} = \bigcup_{i} \Pi (l_i,f_i,g_i),\quad i\in[r].
\]
\end{quote}
In particular, EGB34 implies that for all $a,b\in \monoid$ and $\pi \in \Pi$,
we have $\pi \lcm(a,b) \subseteq \lcm(\pi a,\pi b)$. (This is what
condition EGB3 of \cite{Brouwer09e} looks like when least common
multiples are not unique.)


If EGB34 is fulfilled, then there is a unique {\em inclusion-minimal} finite
set of orbit generators as above, which we denote
\[
O_{f,g} = \{(l_i,f_i,g_i) \mid  i\in[r]\}.
\]
Indeed, suppose that $O$ and $O'$ are both inclusion-minimal sets of
orbit generators for $T_{f,g}$.  For any triple $t \in O$, there are
$t' \in O', \pi \in \Pi$ such that $\pi t' = t$, and similarly $t''
\in O,\tau \in \Pi$ such that $\tau t'' = t'$. Now $t=\pi\tau t''$
and since $O$ is minimal, $t=t''$. But since $\Pi$ is compatible with
a monomial order, $\pi \tau t''=t''$ implies that also the
intermediate expression $t'=\tau t''$ equals $t''$. Hence $O \subseteq
O'$ and equality holds by minimality of $O'$. 

\begin{de}\label{de:s-polys}
For monic $f,g\in \mring{K}$ define the {\em set of S-polynomials} to be
\[
S_{f,g} = \{a f' - b g' \mid (l',f',g') \in O_{f,g};\,  a,b\in\monoid; \text{ and } a \LM(f') = b \LM(g') = l'\}.
\]
Furthermore, define {\em $\Pi$-reduction} of a polynomial $f$ with
respect to a set $G \subseteq \mring{K}$ as follows: while there
exist $g \in G$ and $\pi \in \Pi$ with $\pi\LM(g)|\LM(f)$,
replace $f$ by 
\[ f' := f - \frac{\LC(f)\LM(f)}{\LC(g)\pi\LM(g)}\pi g; \]
and when no such $g$ and $\pi$ exist, return the {\em remainder} $f'$.
\end{de}

One can generalize Gr\"obner theory to our equivariant setting for a monoid algebra satisfying EGB34. In particular, Buchberger's criterion holds, and the following procedure produces an equivariant Gr\"obner basis {\em if} it terminates. 
\begin{alg}\label{alg:Buchberger}
$G = \Buchberger(F)$
\begin{algorithmic}[1]
\REQUIRE $F$ is a finite set of monic elements in $\mring{K},$
the algebra of a monoid $\monoid$ equipped with a $\Pi$-action, satisfying the assumptions above and the condition EGB34.
\ENSURE $G$ is an equivariant Gr\"obner basis of $\ideal{F}$.

\smallskip \hrule \smallskip

\STATE $G\gets F$
\STATE $S\gets \bigcup_{f,g\in G} S_{f,g}$\COMMENT{in particular, compute $O_{f,g}$ needed in Definition~\ref{de:s-polys}}
\WHILE{$S\neq\emptyset$}
	\STATE pick $f \in S$
	\STATE $S\gets S\setminus\{f\}$
	\STATE $h \gets$ the $\Pi$-reduction of $f$ with respect to $G$
  	\IF{$h \neq 0$}
		\STATE $G\gets G\cup \{h\}$
		\STATE $S\gets S\cup \left(\bigcup_{g\in G}S_{g,h}\right)$
	\ENDIF
\ENDWHILE
\smallskip \hrule \smallskip
\end{algorithmic}
\end{alg}

This algorithm has been implemented for the particular case where
$\mring{K}$ is a polynomial ring and $\Pi = \IncNN$ (i.e. the
algorithm described in \cite{Brouwer09e}) in the package {\em
EquivariantGB}~\cite{EquivariantGB} for the computer algebra system
{\em Macaulay2} \cite{grayson2002macaulay}. When the
algebra $\mring{K}$ is $\Pi$-Noetherian, termination of
Algorithm~\ref{alg:Buchberger} is guaranteed, but in general we cannot
make this claim. 

\medskip
We now turn our attention to the task of computing a finite
$\Inc(\NN)$-generating set of binomials of a general toric map as in
the Main Theorem. By the proof of Proposition~\ref{prop:Reduction}
we may assume that $Y$ is as in \eqref{eq:phi}, i.e., it consists of
variables $y_{p,J}$ where $p$ runs through $[N]$ and $J$ runs through
all $k_p$-tuples of distinct natural numbers. Section~\ref{sec:Reduction}
then leads to the following analysis of this task.

\begin{prb}\label{prb:main}
Fix the names of algebras and maps in the following diagram:
\[ R[Y] \xrightarrow{\phi} R[X]\xrightarrow{\psi} R[Z]. \]
Here $\phi$ is the map defined by (\ref{eq:phi}), whose image
is the $R$-algebra spanned by the matching monoid, and $\psi$ is any
$\Sym(\NN)$-equivariant monomial map from $R[X]$ to $R[z_{ij} \mid i \in
[k], j \in \NN]$. For $\ker(\psi\circ\phi)$, how does one compute 
\begin{itemize}
  \item[(a)] a finite set of generators up to $\Inc(\NN)$-symmetry?
  \item[(b)] a finite $\Inc(\NN)$-Gr\"obner basis with respect to a given
  $\Inc(\NN)$-compatible monomial order on $K[Y]$?
\end{itemize}
\end{prb}


The algorithm we are about to construct solves Problem~\ref{prb:main}(a);
indeed, we do not know whether a finite $\Inc(\NN)$-Gr\"obner basis as
in part (b) exists! Our algorithm relies on the fact that we may replace
$R[X]$ above by the matching monoid algebra $\im \phi=R[x^A : A
\text{ good}]$, so as to get the sequence
\begin{equation}\label{eq:YXZ} 
R[Y] \xrightarrow{\phi} R[x^A \mid A \text{ good}]
\xrightarrow{\psi} R[Z].
\end{equation}
Most of our computations will take place in the ring $R[x^A \mid A \text{
good}][Z]$, which is itself a matching monoid with $N$ replaced by $N+k$
and $k_p=1$ for $p \in [N+k]\setminus [N]$. This monoid is Gr\"obner
friendly by the following proposition.

\begin{prop}\label{prop:lcm}
Let $\monoid$ be a submonoid of $\NN^{[k] \times \NN}$ that is generated by the
$\SymNN$-orbits of a finite number of matrices.  For $\Pi = \IncNN$, the monoid algebra $\mring{K}$ satisfies EGB34.
\end{prop}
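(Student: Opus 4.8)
The plan is to make the combinatorics of $\lcm$ in a submonoid $\mon \subseteq \NN^{[k] \times \NN}$ completely explicit and then argue that the resulting set of triples $T_{f,g}$ decomposes into finitely many $\Inc(\NN)$-orbits by a pigeonhole-plus-Higman argument. First I would observe that since $\mon$ is generated by $\Sym(\NN)$-orbits of finitely many matrices, every element of $\mon$ is supported on only finitely many columns, and --- crucially --- there is a global bound $k$ on the number of rows. For two matrices $a, b \in \mon$, any common multiple $l$ must satisfy $l \geq a$ and $l \geq b$ entrywise, hence $l \geq \max(a,b)$ entrywise (the entrywise maximum), but $\max(a,b)$ need not lie in $\mon$, which is exactly why $\lcm(a,b)$ is a \emph{set} of minimal elements rather than a singleton. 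The key structural point is that any minimal common multiple $l$ of $a$ and $b$ is supported on the union of the supports of $a$ and $b$ (adding a nonzero entry outside $\mathrm{supp}(a) \cup \mathrm{supp}(b)$ to a common multiple can only be forced by membership in $\mon$, and I would need to check --- using that $\mon$ is generated by orbits of finitely many matrices whose own supports have bounded column-width --- that minimal common multiples have support contained in a bounded neighbourhood of $\mathrm{supp}(a) \cup \mathrm{supp}(b)$; this boundedness is what keeps $\lcm(a,b)$ finite).

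Next I would pass from a fixed pair $(a,b)$ to the whole set $T_{f,g}$, where $f' = \pi f$ and $g' = \pi g$ range over the $\Inc(\NN)$-orbits of $f$ and $g$. Write $a = \LM(f)$, $b = \LM(g)$; then $\LM(\pi f) = \pi a$ and $\LM(\sigma g) = \sigma b$. So $T_{f,g}$ is parametrized by pairs $(\pi, \sigma) \in \Inc(\NN)^2$ together with a choice of minimal common multiple $l'$ of $\pi a$ and $\sigma b$. The combinatorial type of this data --- namely, how the (finitely many) nonzero columns of $\pi a$ interleave with those of $\sigma b$, together with which of the finitely many minimal common multiples is chosen --- takes only finitely many values, because $a$ and $b$ each have a bounded number of nonzero columns and the interleaving pattern of two bounded finite subsets of $\NN$, up to the order-preserving relabelling that $\Inc(\NN)$ supplies, is one of finitely many types. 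For each fixed interleaving type one picks a representative $(\pi_0, \sigma_0)$ with the columns packed into an initial segment $[M]$ of $\NN$; then every $(\pi, \sigma)$ of that type is obtained from the representative by post-composing with a single $\tau \in \Inc(\NN)$ that sends $[M]$ onto the actual column positions, and this $\tau$ carries the chosen minimal common multiple of $\pi_0 a, \sigma_0 b$ to the corresponding one for $\pi a, \sigma b$ (here I use that $\Inc(\NN)$ acts by monoid endomorphisms compatibly with the divisibility order, so images of minimal common multiples are minimal common multiples, which is exactly the "$\pi \lcm(a,b) \subseteq \lcm(\pi a, \pi b)$" remark). Hence $T_{f,g}$ is the union, over the finitely many interleaving types and the finitely many minimal-common-multiple choices within each, of single $\Inc(\NN)$-orbits, which is the assertion of EGB34.

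The main obstacle I expect is the support-boundedness claim for minimal common multiples: a priori a common multiple of $a$ and $b$ in $\mon$ could need extra nonzero columns far away from $\mathrm{supp}(a) \cup \mathrm{supp}(b)$ in order to satisfy the defining membership conditions of $\mon$ (for matching monoids, the good-matrix row/column-sum conditions), and I must rule out that a \emph{minimal} such multiple needs them, or at least bound how far they can be. For the matching-monoid case this follows from the explicit description of $\mon$ as generated by $\Sym(\NN)$-orbits of matchings of bounded size: any common multiple decomposes as a sum of such matchings, and one can delete any matching whose support is disjoint from $\mathrm{supp}(a) \cup \mathrm{supp}(b)$ and still retain a common multiple (it stays in $\mon$ since $\mon$ is a monoid and the remaining summands still add up to something $\geq a, b$), so minimal common multiples are supported within $\mathrm{supp}(a) \cup \mathrm{supp}(b)$ enlarged by the column-width of a single generator; I would present this reduction carefully and then note that the remaining finiteness bookkeeping --- that finitely many nonzero columns of bounded height over a bounded index set yield finitely many matrices --- is immediate, and that Higman's Lemma (Lemma \ref{higmans}) or just direct pigeonhole on interleaving types supplies the finitely-many-orbits conclusion.
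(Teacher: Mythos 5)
Your argument hinges on the claim that a minimal common multiple of $a=\LM(f)$ and $b=\LM(g)$ is supported in (a bounded enlargement of) $\operatorname{supp}(a)\cup\operatorname{supp}(b)$, so that $\lcm(a,b)$ is finite; for matching monoids this is false, and it is the central point, not a technicality. Take $k=2$, let $\mon$ be the monoid of good matrices in $\NN^{[2]\times\NN}$ (generated by the $\Sym(\NN)$-orbit of a single matching), and let
\[
a=\begin{bmatrix}1&0&0&\cdots\\0&1&0&\cdots\end{bmatrix},\qquad
b=\begin{bmatrix}0&1&0&\cdots\\1&0&0&\cdots\end{bmatrix}.
\]
For any $j\geq 2$ let $l_j$ be the matrix whose columns $0,1,j$ all equal $(1,1)^T$ and which is zero elsewhere. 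Then $l_j-a$ and $l_j-b$ are good (row sums $2$, column sums $1,1,2$), so $l_j$ is a common multiple; and it is minimal: any proper divisor of $l_j$ that is a common multiple would have degree $2$, hence would equal $a+b$ (a degree-$2$ common multiple dominates $\max(a,b)$ entrywise, which already has row sums $2$), but $l_j-(a+b)$ has column sum $2$ in column $j$ against row sums $1$, so it is not good. Thus $\lcm(a,b)$ is infinite, with elements supported arbitrarily far from $\operatorname{supp}(a)\cup\operatorname{supp}(b)$; only its decomposition into $\Inc(\NN)$-orbits is finite, which is exactly why EGB34 is phrased in terms of orbits of triples. Your proposed repair also fails on this example: in any decomposition of $l_j$ into three matchings, each matching meets $\{0,1\}$ (e.g.\ $l_j=(e_{0,0}+e_{1,1})+(e_{0,1}+e_{1,j})+(e_{0,j}+e_{1,0})$), so there is no summand disjoint from $\operatorname{supp}(a)\cup\operatorname{supp}(b)$ to delete; and even when such a summand $B$ exists, $l-B$ need not remain a common multiple, because $(l-a)-B$ has row sums $d_l-d_a-1$ while a column outside $\operatorname{supp}(B)$ may keep column sum $d_l-d_a$, violating goodness (your parenthetical only checks the entrywise inequality $l-B\geq a$, not that $(l-B)-a\in\mon$).

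Because of this, the step ``finitely many interleaving types, finitely many lcm choices per type'' collapses: the representative pair already has infinitely many minimal common multiples. A repair along your lines would have to enlarge the type to record the support and entries of $l'$ itself, bound the degree (hence the number and height of nonzero columns) of minimal common multiples, and prove that the increasing map $\tau$ matching the types carries $\lcm(\pi_0 a,\sigma_0 b)$ onto $\lcm(\pi a,\sigma b)$ --- note that even the containment $\pi\lcm(a,b)\subseteq\lcm(\pi a,\pi b)$ is not formal (it needs pulling divisors of $\pi l$ back along $\pi$, using $\Sym(\NN)$-stability of $\mon$; the paper lists it as a consequence of EGB34, not an input). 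None of this is in your sketch. The paper's proof avoids the combinatorics of lcm's entirely: it observes that $K[\mon^3]$ is $\Inc(\NN)$-Noetherian by the Main Theorem, takes a finite $\Inc(\NN)$-generating set $L\subseteq T_{a,b}$ of the monomial ideal $\ideal{T_{a,b}}$, and uses minimality of least common multiples to force every element of $T_{a,b}$ to lie in the orbit of an element of $L$; the passage from $(a,b)$ to general $(f,g)$ is then handled with the finite sets $\Lambda_{a,a'}$ and the finiteness of orbit-products from \cite{Brouwer09e}. If you want a hands-on proof you should either adopt that Noetherian argument or first establish the degree bound and equivariance of lcm sets described above.
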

\begin{proof}
Any such $\mring{K}$ is the image of some map $\phi$ as in the Main Theorem (with $R = K$), and so is $\IncNN$-Noetherian.  Similarly $K[\monoid^3] = K[M]^{\otimes 3}$ is $\IncNN$-Noetherian.  For any $a,b \in \monoid$, the monomial ideal $\ideal{T_{a,b}} \subseteq K[\monoid^3]$ is $\IncNN$-stable.  Let $L \subseteq T_{a,b}$ be a minimal finite $\IncNN$-generating set of $\ideal{T_{a,b}}$.

For any $(l, \pi a, \sigma b) \in T_{a,b}$, there is some
$(m,a',b') \in L$ and $\tau \in \IncNN$ such that
$\tau(m,a',b') | (l, \pi a, \sigma b)$.  It is clear that
$\tau a' = \pi a$ and $\tau b' = \sigma b$. Since $a'$ and
$b'$ divide $m$, $\pi a$ and $\sigma b$ must divide $\tau
m$, and in turn $\tau m$ divides $l$.  But $l \in \lcm(\pi
a, \sigma b)$ by assumption, so $l = \tau m$.  Therefore
$(l, \pi a, \sigma b) = \tau(m,a',b')$.  This shows that
$T_{a,b}$ is the union of the $\IncNN$-orbits of the
elements of $L$, and then $L = O_{a,b}$.

To establish the same fact for a general pair $f,g\in \mring{K}$ we first determine $O_{a,b}$ where $a = \LM(f)$ and $b = \LM(g)$.  For any $(l,\pi f, \sigma g) \in T_{f,g}$, the triple $(l,\pi a, \sigma b) \in T_{a,b}$ is in the orbit of some $(m,a',b') \in O_{a,b}$.  This implies $a' = \tau a$ for some $\tau \in \IncNN$, but $\tau$ is not unique.  Define
 \[ \Lambda_{a,a'} := \{\tau \in \IncNN \mid a' = \tau a; \text{ and } n \in \im \tau \text{ for all }n > \ell_{a'} \}. \]
Here $\ell_{a'}$ denotes the {\em length} of $a'$ as in Section \ref{sec:Noetherianity1}, the maximum index value among all non-zero columns of $a'$.  Note that $\Lambda_{a,a'}$ is a finite set.

Since $\pi a$ is in the orbit of $a'$, $\pi$ factors through some $\tau \in \Lambda_{a,a'}$.  So $(l,\pi f, \sigma g) = (\gamma m, \alpha \tau_1 f, \beta \tau_2 g)$ for some $\gamma, \alpha, \beta \in \IncNN$, $\tau_1 \in \Lambda_{a,a'}$ and $\tau_2 \in \Lambda_{b,b'}$.  Therefore
 \[ T_{f,g} \subseteq \bigcup_{(m,f',g') \in U_{f,g}} \Pi m \times \Pi f' \times \Pi g' \]
where
 \[ U_{f,g} = \bigcup_{(m,a',b') \in T_{a,b}} \{(m,\tau_1 f, \tau_2 g) \mid \tau_1 \in \Lambda_{a,a'}, \tau_2 \in \Lambda_{b,b'}\}. \]
For each $(m,f',g')$, the set $\Pi m \times \Pi f' \times \Pi g'$ is the union of a finite number of $\IncNN$-orbits.  To prove this one can follow closely the proof of \cite{Brouwer09e} Lemma 3.4.  From the finite set of generators we select only those $(\gamma m, \alpha f', \beta g')$ with $\gamma m \in \lcm(\alpha f', \beta g')$, and call this set $O_{(m,f',g')}$.  Then
 \[ O_{f,g} = \bigcup_{(m,f',g') \in U_{f,g}} O_{(m,f',g')} \]
is as desired.
\end{proof}

\begin{alg}\label{alg:toricIdeal}
$T = \toricIdeal(\phi)$
\begin{algorithmic}[1]
\REQUIRE $\phi : R[Y] \to R[Z]$ is a monomial map as in the Main Theorem.
\ENSURE $T$ is a finite set of generators of $\ker \phi$ as
$\Inc(\NN)$-stable ideal.

\smallskip \hrule \smallskip

\STATE Replace $Y$ by the set of variables $\{y_{p,J}\}_{p,J}$ as in
the proof of Proposition~\ref{prop:Reduction}.

\STATE Decompose $\phi$ with the composition of two maps
$\phi$ and $\psi$ as in diagram (\ref{eq:YXZ}).

\STATE Consider the ideal $I_\psi \subset R[x^A \mid A \text{ good}][Z]$
generated by the finite set $F$ of binomials $\psi(x^A)-x^A,$ where $A \in
\prod_{p \in [N]} \NN^{[k_p] \times \NN}$ is good of multi-degree \[
d \in \{(1,0,\ldots,0),\ldots,(0,0,0,\ldots,1)\} \subseteq \NN^{[N]}
\] and $\preceq$-minimal; the $\IncNN$-orbits of such monomials $x^A$
generate $R[x^A \mid A \text{ good}]$.


\STATE Run Algorithm~\ref{alg:Buchberger} for the input $F$ with respect
to a monomial order that eleminates the variables $Z$. Since $R[x^A
\mid A \text{ good}][Z]$ is the monoid algebra of a monoid where
$\Inc(\NN)$-divisibility is a w.p.o., the algorithm
terminates. Standard elimination theory
implies that $G' = G\cap R[x^A \mid A \text{ good}]$ generates 
\[I_\psi \cap R[x^A \mid A \text{ good}] = \ker \psi \cap \im \phi.\]

\STATE Let $T$ consist of preimages of elements in $G'$ (one per element)
and a finite number of binomials whose orbits generating $\ker \phi$
(see Corollary~\ref{cor:kernel-of-phi}).

\smallskip \hrule \smallskip
\end{algorithmic}
\end{alg}

\begin{re}We can execute Algorithm~\ref{alg:toricIdeal} for any coefficient ring $R$ (not necessarily a field), since all polynomials that appear in computation are binomials with coefficients $\pm 1$.\end{re}

In the following two remarks we comment on two major subroutines not spelled out in the sketch of the algorithm above.
\begin{re}\label{re:s-polynomials}
Unlike in the usual Buchberger algorithm, the task of computing
S-polynomials in Algorithm~\ref{alg:Buchberger} is far from being
trivial. To accomplish that, one needs to compute the set $O_{f,g}$, which can be done following the lines of the proof of Proposition~\ref{prop:lcm}. While this procedure is {\em effective}, by no means it is {\em efficient}.
\end{re}

\begin{re} \label{re:preimage}
In the last step of Algorithm~\ref{alg:toricIdeal} a preimage $\phi^{-1}(g)$ of an element $g\in G$ can be computed by reducing the problem to one of computing maximal matchings of bipartite graphs, a well studied problem in combinatorics.  Any monomial $x^A \in \im \phi$ can be considered as a collection of $N$ bipartite graphs with adjacency matrices $A_0,\ldots,A_{N-1}$ as in Section~\ref{sec:Relations}, where each $A_p$ has bipartition $[k_p] \sqcup \NN$.  Fixing $A_p,$ let $S \subset \NN$ be the set of vertices in the second partition with degree $d_{A_p}$ (i.e. the indices of the columns of $A_p$ with column sum equal to $d_{A_p}$).  A matching $B$ covering $[k_p]$ and $S$ can be computed using the Hungarian method or other algorithms for computing weighted bipartite matchings (see \cite{schrijver2003combinatorial} Chapter 17 for more details).  The matching $B$ directly corresponds to a variable $y_{p,J} \in Y$ with $\phi(y_{p,J}) = x^B$.  Since $B$ covers $S,$ it follows that $A_p- B$ is a good matrix.  Therefore $x^{A_p}/\phi(y_{p,J})$ is also in $\im \phi$ and can be decomposed further by repeating the process.
\end{re}

Algorithm~\ref{alg:toricIdeal} has an important theoretical
consequence, a solution to Problem~\ref{prb:main}(a): a finite
$\Inc(\NN)$-generating set of the toric ideals in the Main Theorem is
computable. However, in view of Remark~\ref{re:s-polynomials} and a
more elementary procedure (albeit with no termination guarantee)
given in the following section that solves a
harder Problem~\ref{prb:main}(b) for a small example, we postpone a
practical implementation of Algorithm~\ref{alg:toricIdeal}. 

\section{An example, and a more na\"ive implementation}
\label{sec:Example}

A more elementary approach to
Problem~\ref{prb:main}---indeed, to the hardest variant---is,
for a given order on $[Y,Z]$, to directly apply the algorithm of
\cite{Brouwer09e} to the graph of the entire map $\psi \circ
\phi$,
rather than computing generators for the kernels of $\psi$ and $\phi$
separately as in Algorithm~\ref{alg:toricIdeal}. The advantages of this
approach are that it is simpler to implement, and that it produces not
just a generating set, but an $\Inc(\NN)$-equivariant Gr\"obner basis.
The disadvantage is that {\em we do not know} whether the procedure is
guaranteed to terminate. We now set up a version of the usual equivariant
Buchberger algorithm that is particularly easy to implement, and conclude
with one nontrivial computational example.

\bigskip

For convenience let $\omega = \psi \circ \phi$.  Let $I_\omega \subset
R[Y,Z]$ be the ideal corresponding to the graph of $\omega,$ so
$I_{\omega}$ is generated by the binomials of the form $y - \omega(y)$
for each variable $y \in Y$.  Choosing a representative $y_p =
y_{p,(0,\ldots,k_p -1)}$ of each $\Sym(\NN)$-orbit in $Y,$ the ideal is
$\Inc(\NN)$-generated by the finite set
\[
 F := \{ \sigma y_p - \omega(\sigma y_p) \mid p \in [N],\: \sigma \in \Sym([k_p])\}.
\]
Choose an $\Inc(\NN)$-compatible monomial order $\leq$ on $R[Y,Z]$
that eliminates $Z$. Then apply to $F$ the equivariant Gr\"obner
basis algorithm from \cite{Brouwer09e} (which is essentially
Algorithm~\ref{alg:Buchberger}). Note that since we are working in a
polynomial ring $R[Y,Z],$ rather than a more complicated monoid ring
$R[X \mid X \text{ good}][Z],$ every pair of monomials has only one $\lcm$, which is is
straightforward to compute.  If the procedure terminates with output
$G,$ then $G \cap R[Y]$ is an $\Inc(\NN)$-equivariant Gr\"obner basis
of $I_{\omega} \cap R[Y] = \ker \omega$.

This procedure can be adapted to make use of existing, fast
implementations of traditional Gr\"obner basis algorithms.  For each $n
\in \NN$ truncate to the first $n$ index values by defining
$Y_n := \{ y_{p,J} \mid J \in [n]^{k_p}\},$
$Z_n := \{z_{i,j} \in Z \mid j \in [n]\},$ and\\
$F_n := \{ y - \omega(y) \mid y \in Y_n\}.$
Let $I_n$ be the ideal in $R[Y_n,Z_n]$ generated by $F_n$.  Note that
each $I_n$ is $\Sym([n])$-stable and that $\bigcup_{n \in \NN} I_n =
I_{\omega}$.  Let $\Inc(m,n)$ denote the set of all strictly increasing
maps $[m] \to [n]$, and equip $K[Y_n, Z_n]$ with the
restriction of the $\Inc(\NN)$-monomial order $\leq$.

\begin{alg}\label{alg:naive}
$G = \truncatedBuchberger(\omega)$
\begin{algorithmic}
\REQUIRE $\phi : R[Y] \to R[Z]$ is a monomial map in the Main Theorem.
\ENSURE $G$ is an $\Inc(\NN)$-equivariant Gr\"obner basis of $\ker \phi$.

\smallskip \hrule \smallskip

\STATE $n \gets \max_{p \in [N]} k_p$

\WHILE{true}
	\STATE $F_n \gets \{ y - \omega(y) \mid y \in Y_n\}$
	\STATE $G_n \gets \GrobnerBasis(F_n)$
	\STATE $m \gets \lfloor (n+1)/2 \rfloor$
  	\IF{$m \geq \max_{p \in [N]} k_p$ and $G_n = \Inc(m,n)G_m$}
		\STATE $G \gets G_m \cap R[Y]$
		\STATE return $G$
	\ENDIF
	\STATE $n \gets n+1$
\ENDWHILE

\smallskip \hrule \smallskip
\end{algorithmic}
\end{alg}

Here $\GrobnerBasis$ denotes any algorithm to compute a
traditional Gr\"obner basis.  If
$\truncatedBuchberger(\omega)$ terminates, this implies that
there is some $m \geq \max_{p \in [N]} k_p$ such that
$\Inc(m,n)G_m$ satisfies Buchberger's criterion for some $n
\geq 2m -1$.  Then $G_m$ satisfies the equivariant
Buchberger criterion, so $G_m$ is an equivariant Gr\"obner basis.  Because we require that $m \geq \max_{p \in [N]} k_p,$ the set $G_m$ generates $I_{\omega}$ up to $\Inc(\NN)$-action.  Finally $G = G_m \cap R[Y]$ is an equivariant Gr\"obner basis for $\ker \omega$.

\begin{ex}\label{ex:Sym21}
Set $Y := \{y_{j_0,j_1} \mid j_0,j_1 \in \NN,\; j_0\neq j_1\}$ and $Z :=
\{z_i \mid i \in \NN\},$ each consisting of a single $\Sym(\NN)$-orbit,
and define the monomial map $\omega:R[Y] \to R[Z]$ by
  \[ \omega: y_{j_0,j_1} \mapsto z_{j_0}^2 z_{j_1}. \]
Whether $\ker \omega$ is finitely generated was posed as an open question
in \cite{Hillar13} (Remark 1.6).  This is answered in the affirmative
by Theorem~\ref{thm:main}, but by applying Algorithm~\ref{alg:naive}
we have also explicitly computed an $\Inc(\NN)$-equivariant Gr\"obner
basis.  The Gr\"obner basis computations were carried out using the
software package {\em 4ti2} \cite{hemmecke20084ti2}, which features
algorithms specifically designed for computing Gr\"obner bases of toric
ideals.  The monomial order on $Y$ is lexicographic, where variables are
ordered by $y_{i,j} < y_{i',j'}$ if $i < i',$ or $i = i'$ and $j < j'$.

\begin{figure}
\[
\begin{array}[t]{cc}
\begin{array}{c}
\framebox{degree 3}\\
\Markov{y_{1,2}y_{0,1}^2-y_{1,0}^2y_{0,2}}\\
\Markov{y_{2,0}y_{0,1}^2-y_{1,0}y_{0,2}^2}\\
y_{2,1}y_{0,2}^2-y_{2,0}^2y_{0,1}\\
\Markov{y_{2,1}y_{1,0}y_{0,2}-y_{2,0}y_{1,2}y_{0,1}}\\
y_{2,1}y_{1,0}^2-y_{1,2}^2y_{0,1}\\
y_{2,1}^2y_{0,2}-y_{2,0}^2y_{1,2}\\
y_{2,1}^2y_{1,0}-y_{2,0}y_{1,2}^2\\
y_{2,1}y_{1,0}y_{0,3}-y_{2,0}y_{1,3}y_{0,1}\\
y_{2,1}^2y_{0,3}-y_{2,0}^2y_{1,3}\\
y_{2,3}y_{1,2}y_{0,2}-y_{2,0}^2y_{1,3}\\
y_{3,0}y_{1,2}y_{0,2}-y_{2,0}y_{1,3}y_{0,3}\\
y_{3,0}y_{1,2}^2-y_{2,0}y_{1,3}^2\\
y_{3,0}y_{2,1}^2-y_{2,3}y_{2,0}y_{1,3}\\
y_{3,1}y_{0,2}^2-y_{2,1}y_{0,3}^2\\
y_{3,1}y_{1,0}y_{0,2}-y_{3,0}y_{1,2}y_{0,1}\\
y_{3,1}y_{1,2}y_{0,2}-y_{2,1}y_{1,3}y_{0,3}\\
y_{3,1}y_{2,3}y_{0,3}-y_{3,0}^2y_{2,1}\\
y_{3,1}^2y_{0,2}-y_{3,0}^2y_{1,2}\\
y_{3,2}y_{1,3}y_{0,3}-y_{3,0}^2y_{1,2}\\
y_{3,2}y_{2,0}y_{1,3}-y_{3,0}y_{2,3}y_{1,2}\\
y_{3,2}y_{2,0}y_{1,4}-y_{3,0}y_{2,4}y_{1,2}\\
y_{3,2}y_{2,1}y_{0,3}-y_{3,1}y_{2,3}y_{0,2}\\
y_{3,2}y_{2,1}y_{0,4}-y_{3,1}y_{2,4}y_{0,2}\\
y_{4,0}y_{2,3}y_{1,3}-y_{3,0}y_{2,4}y_{1,4}\\
y_{4,1}y_{2,3}y_{0,3}-y_{3,1}y_{2,4}y_{0,4}\\
y_{4,2}y_{1,3}y_{0,3}-y_{3,2}y_{1,4}y_{0,4}\\
y_{4,2}y_{2,0}y_{1,3}-y_{4,0}y_{2,3}y_{1,2}\\
y_{4,2}y_{2,1}y_{0,3}-y_{4,1}y_{2,3}y_{0,2}
\end{array}&
\begin{array}{c}
\framebox{degree 2}\\
\Markov{y_{1,3}y_{0,2}-y_{1,2}y_{0,3}}\\
\Markov{y_{2,0}y_{1,0}-y_{1,2}y_{0,2}}\\
y_{2,1}y_{0,1}-y_{1,2}y_{0,2}\\
y_{2,3}y_{0,1}-y_{2,1}y_{0,3}\\
y_{2,3}y_{1,0}-y_{2,0}y_{1,3}\\
y_{3,1}y_{2,0}-y_{3,0}y_{2,1}\\
y_{3,2}y_{0,1}-y_{3,1}y_{0,2}\\
y_{3,2}y_{1,0}-y_{3,0}y_{1,2}\\
\\
\framebox{degree 4}\\
y_{2,1}y_{1,2}y_{0,3}y_{0,2}-y_{2,0}^2y_{1,3}y_{0,1}\\
y_{3,1}y_{2,3}y_{1,3}y_{0,4}-y_{3,0}^2y_{2,1}y_{1,4}\\
y_{3,1}y_{2,3}^2y_{0,4}-y_{3,0}^2y_{2,4}y_{2,1}\\
y_{3,2}y_{2,3}y_{1,3}y_{0,4}-y_{3,0}^2y_{2,4}y_{1,2}\\
y_{4,1}y_{2,3}y_{1,4}y_{0,4}-y_{4,0}^2y_{2,1}y_{1,3}\\
y_{4,1}y_{3,2}y_{1,4}y_{0,4}-y_{4,0}^2y_{3,1}y_{1,2}\\
y_{4,1}y_{3,4}y_{2,4}y_{0,5}-y_{4,0}^2y_{3,1}y_{2,5}\\
\\
\framebox{degree 5}\\
y_{2,1}y_{1,2}^2y_{0,3}^2-y_{2,0}^2y_{1,3}^2y_{0,1}\\
y_{2,1}y_{1,2}^2y_{0,4}y_{0,3}-y_{2,0}^2y_{1,4}y_{1,3}y_{0,1}\\
y_{3,2}y_{2,3}^2y_{1,4}y_{0,4}-y_{3,0}^2y_{2,4}^2y_{1,2}\\
y_{3,2}y_{2,3}^2y_{1,4}y_{0,5}-y_{3,0}^2y_{2,5}y_{2,4}y_{1,2}\\
y_{4,1}y_{2,3}y_{1,4}^2y_{0,5}-y_{4,0}^2y_{2,1}y_{1,5}y_{1,3}\\
y_{4,1}y_{3,2}y_{1,4}^2y_{0,5}-y_{4,0}^2y_{3,1}y_{1,5}y_{1,2}\\
y_{4,3}y_{4,0}^2y_{3,2}y_{3,1}-y_{4,2}y_{4,1}y_{3,4}^2y_{0,3}\\
y_{5,1}y_{4,2}y_{3,5}^2y_{0,3}-y_{5,0}^2y_{4,3}y_{3,2}y_{3,1}
\end{array}
\end{array}
\]
\caption{An $\Inc(\NN)$-equivariant Gr\"obner basis for $\ker \omega$ in Example~\ref{ex:Sym21}. 
The five highlighted binomials form a $\Sym(\NN)$-equivariant Markov basis according to~\cite{KKL:equivariant-markov}.}
\label{figure:Sym21}
\end{figure}

The result displayed in Figure~\ref{figure:Sym21} consists of 51 generators
with indices from $\{0,1,2,3,4,5\}$ and degrees up to 5.
Note that a minimal generating set resulting from a study of the family of equivariant toric maps of the form 
\[y_{ij}\mapsto z_i^az_j^b,\quad i,j\in\NN, i\neq j,\]
for fixed $a,b\in\NN$ in~\cite{KKL:equivariant-markov} is much smaller.
\end{ex}

\begin{re}\label{re:Inc12}
As pointed out in the Introduction, the technique laid out in this article
does not settle the question whether the finite generatedness of $\ker
\phi$ in the Main Theorem persists when $\Inc(\NN)$ acts with finitely
many orbits on $Y$ and the monomial map $\phi$ is required to be merely
$\Inc(\NN)$-equivariant (though we do know that $\im \phi$ needs not be
$\Inc(\NN)$-Noetherian in this case).

However, a na\"ive elimination procedure terminates, for instance, for
the $\IncNN$-analogue of Example~\ref{ex:Sym21}, i.e., for the same map,
but with the smaller set of variables
\[Y := \{y_{j_0,j_1} \mid j_0,j_1 \in \NN,\; j_0 \boldmath{>} j_1\}.\]
A computation that can be carried out with {\em EquivariantGB}~\cite{EquivariantGB} produces a finite number of generators of the kernel:
\[\{y_{3,1}y_{2,0} - y_{3,0}y_{2,1},\; y_{3,2}^2y_{1,0} - y_{3,1}y_{3,0}y_{2,1},\; y_{4,2}y_{3,2}y_{1,0} - y_{4,0}y_{3,1}y_{2,1}\}.\]
\end{re}

%
%


\bibliographystyle{alpha}
\bibliography{inftoric}

\end{document}